\newtheorem{theorem}{Theorem}[section] 
\newtheorem{thmx}{Theorem}
\newtheorem{lemma}[theorem]{Lemma}
\newtheorem{proposition}[theorem]{Proposition}
\newtheorem{corollary}[theorem]{Corollary}
\theoremstyle{definition} 
\newtheorem{example}[theorem]{Example}
\newtheorem{definition}[theorem]{Definition}
\numberwithin{equation}{section} 
\newcommand{\Mod}[1]{\ (\mathrm{mod}\ #1)}
\newtheorem{remark}[theorem]{Remark}
\def\N{\operatorname{\mathbb{N}}}
\def\Z{\operatorname{\mathbb{Z}}}
\def\Q{\operatorname{\mathbb{Q}}}
\def\R{\operatorname{\mathbb{R}}}
\def\C{\operatorname{\mathbb{C}}}
\def\F{\operatorname{\mathbb{F}}}
\def\P{\operatorname{\mathbb{P}}}
\def\G{\operatorname{\mathbb{G}}}
\def\GL{\operatorname{GL}}
\def\SO{\operatorname{SO}}
\def\SL{\operatorname{SL}}
\def\Sp{\operatorname{Sp}}
\def\Hom{\operatorname{Hom}}
\def\spec{\operatorname{Spec}}
\def\pic{\operatorname{Pic}}
\def\gr{\operatorname{Gr}}
\def\fpt{\operatorname{fpt}}
\def\lct{\operatorname{lct}}
\def\Fl{\operatorname{Fl}}
\begin{document}

\title{Computing the {$F$}-pure Threshold of Flag Varieties} 
\author{Justin Fong}
\date{}
\address{Justin Fong, Department of Mathematics, Purdue University, West Lafayette, IN 47907, USA}
\email{jafong1@gmail.com}

\let\thefootnote\relax

\begin{abstract}
We compute the $F$-pure threshold of the natural cone over flag varieties in characteristic $p>0$. Our calculations are mainly focused on flag varieties that are arithmetically Gorenstein, but we offer some results in the non-Gorenstein case. Our goal is to determine the $a$-invariant of the cone. As a result, the $F$-pure thresholds we find are independent of the characteristic $p$, hence one immediately gets the value of the log canonical threshold of flags in characteristic 0 as well. 
\end{abstract} 

\maketitle

\section{Introduction}

Throughout this paper, all rings $(R,\mathfrak{m},\Bbbk)$ are standard graded domains over a field $\Bbbk$, where $\mathfrak{m}=R_{>0}$ is the irrelevant maximal ideal, and $\Bbbk$ is an algebraically closed field, unless otherwise stated.

The $F$-pure threshold, introduced by Takagi and Watanabe in \cite{MR2097584},
is a numerical invariant associated to rings with $F$-pure singularities. $F$-pure singularities are an example class of $F$-singularities, which are classes of singularities defined in positive characteristic using the Frobenius map. The $F$-pure threshold is defined as follows. Suppose $R$ has characteristic $p$ and is $F$-finite and $F$-pure. Let $\mathfrak{a}\subseteq R$ be a nonzero ideal. For a real number $t\in\R_{\ge0}$, the pair $(R,\mathfrak{a}^t)$ is called \emph{$F$-pure} if for every $q=p^e\gg0$, there exists an element $f\in \mathfrak{a}^{\lfloor (q-1)t \rfloor}$ such that the map $R\to R^{1/q}$ sending 1 to $f^{1/q}$ splits as an $R$-module homomorphism. The \emph{$F$-pure threshold} of $\mathfrak{a}$ is defined as
\[\fpt(\mathfrak{a}) := \sup\{t\in\R_{\ge0} \mid (R,\mathfrak{a}^t) \ \text{is $F$-pure}\}.\]
When $\mathfrak{a}=\mathfrak{m}$, we denote $\fpt(R):=\fpt(\mathfrak{m})$, and call this the \emph{$F$-pure threshold of $R$}. 

The invariant $\fpt(\mathfrak{a})$ can be thought of as the characteristic $p$ counterpart of the \emph{log canonical threshold} $\lct(\mathfrak{a})$ in characteristic zero, which is a measurement of singularities on the variety $V(\mathfrak{a})$. Although their definitions differ, many important connections have been established between these two invariants, one of which is that the log canonical threshold is approximated by the $F$-pure threshold through reduction modulo $p$ (see \cite[Theorem 3.4]{MR2185754}).

Explicit formulas for $\fpt(R)$ have been computed in \cite{MR2097584}, \cite{MR2549545}, \cite{MR3354064}, \cite{MR3719471}, and \cite{MR4809892}. The aim of this paper is to add to this list a formula for $\fpt(R)$ where $R$ is the homogeneous coordinate ring of a \emph{flag variety} under the Pl\"{u}cker embedding, which is a natural embedding into projective space. Although we study various types of flag varieties determined by semisimple algebraic groups, the common feature these rings share is the Gorenstein property. Thus, by a well-known fact in the theory of $F$-pure thresholds, $\fpt(R)$ is equal to the negative of the $a$-invariant (see Theorem \ref{thm: fpt graded rings}), so the problem reduces to determining this number. We are able to calculate the $a$-invariant of the coordinate rings of their associated flag varieties via the combinatorics of an underlying poset, and through root systems of the associated semisimple group acting on the flag variety. Our main results are listed in the following.

\begin{thmx}\label{thm: summary of results}
Let $R$ be the homogeneous coordinate ring of the flag variety $X$, under the Pl\"{u}cker embedding, of the following types:
    \begin{enumerate}
        \item $X$ is the partial flag variety $\Fl(d_1,\dots,d_r)$. Then $\fpt(R) = n+d_r-d_1$ (see Corollary \ref{cor: fpt of flag}). \label{item: 1}

        \item $X$ is a minucule Grassmannian. The values of $\fpt(R)$ are given by Table \ref{table: minuscule} (see page \pageref{table: minuscule}). \label{item: 2}
        
        \item $X$ is a Grassmannian of exceptional type. The values of $\fpt(R)$ are given by Table \ref{table: exceptional} (see page \pageref{table: exceptional}). \label{item: 3}
        
        \item $X$ is a generalized flag $G/Q$ that is projectively embedded via certain dominant weights. The values of $\fpt(R)$ are given by Propositions \ref{prop: non-gorenstein flag} and \ref{prop: non-gorenstein grassmannians}. \label{item: 4}
    \end{enumerate}
\end{thmx}

Another main result of our paper is that we make a connection between the $a$-invariant of the homogeneous coordinate ring $R$ of a general Grassmannian $G/P$ and the root system associated to the algebraic group $G$. This is stated as follows.

\begin{thmx}[Corollary \ref{cor: a-invariant via root systems}]\label{thm: root systems}
Let $\Phi$ be the root system associated to $G$, and $P$
be a maximal parabolic subgroup determined by the set of simple roots $I=\Delta\setminus\{\alpha\}$, for some simple root $\alpha\in\Delta$. Let $\varpi$ be the fundamental weight corresponding to $P$. Then
\[-a(R)\varpi = \sum_{\alpha\in\Phi^+\setminus\Phi_I}\alpha.\]
\end{thmx}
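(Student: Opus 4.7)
The plan is to read off the identity from the canonical bundle of $G/P$ combined with the characterization of $a(R)$ for graded Gorenstein rings. The preceding Proposition \ref{prop: Grassmannian Gorenstein a-invariant} has already established that $R$ is Gorenstein, so the canonical module satisfies $\omega_R \cong R(a(R))$; sheafifying, this gives
\[
\omega_{G/P} \;\cong\; \mathcal{O}_{G/P}(a(R)),
\]
where $\mathcal{O}_{G/P}(1)$ is the very ample line bundle inducing the Pl\"ucker embedding. Since $P$ is maximal, $\pic(G/P) \cong \Z$ is generated by the homogeneous line bundle $\mathcal{L}(\varpi)$ associated to the fundamental weight, and the Pl\"ucker embedding is exactly the one furnished by $\mathcal{L}(\varpi)$, so $\mathcal{O}_{G/P}(a(R)) = \mathcal{L}(a(R)\,\varpi)$.

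Next I compute $\omega_{G/P}$ directly as a homogeneous line bundle. The tangent space at the base point is $\mathfrak{g}/\mathfrak{p}$; since $\mathfrak{p}$ contains the Borel $\mathfrak{b}$ together with the negative root spaces indexed by $\Phi_I$, the torus weights of $\mathfrak{g}/\mathfrak{p}$ are exactly $\{-\alpha : \alpha\in \Phi^+\setminus\Phi_I\}$. Therefore $\det(\mathfrak{g}/\mathfrak{p})$ has weight $-\sum_{\alpha\in\Phi^+\setminus\Phi_I}\alpha$, and passing to duals and to the associated line bundle yields
\[
\omega_{G/P} \;\cong\; \mathcal{L}\!\Bigl(-\sum_{\alpha\in\Phi^+\setminus\Phi_I}\alpha\Bigr).
\]
Comparing the two expressions for $\omega_{G/P}$ and using the injectivity of the character-to-line-bundle map for $G/P$ (noting that $G$ is semisimple so has no nontrivial characters to quotient by), the corresponding weights must coincide:
\[
-\sum_{\alpha\in\Phi^+\setminus\Phi_I}\alpha \;=\; a(R)\,\varpi,
\]
which rearranges to the claimed formula.

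The main obstacle is really a bookkeeping one rather than a conceptual one: ensuring that the sign conventions for (a) the identification of characters of $P$ with line bundles on $G/P$, (b) the isomorphism $\omega_R\cong R(a(R))$ for graded Gorenstein domains, and (c) the identification of $\mathcal{O}(1)$ with $\mathcal{L}(\varpi)$ are mutually consistent. I would verify the normalization by checking the model case $\P^n = \SL_{n+1}/P$, where removing $\alpha_1$ gives $\Phi^+\setminus\Phi_I=\{e_1-e_j\}_{j\ge 2}$, the sum is $(n+1)\varpi_1$, and $a(R)=-(n+1)$, confirming the sign. Modulo this check, the corollary is essentially an unpacking of the canonical bundle computation that already underlies Proposition \ref{prop: Grassmannian Gorenstein a-invariant}.
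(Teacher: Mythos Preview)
Your argument is correct and lands on the same comparison the paper makes: identify $\omega_{G/P}$ on one hand as $\mathcal{L}(-2\rho_I)$ and on the other as $\mathcal{L}(a(R_d)\varpi_d)$, then invoke injectivity of $\lambda\mapsto\mathcal{L}(\lambda)$. The difference is in how that second identification is obtained. The paper does not sheafify $\omega_R\cong R(a(R))$; instead it continues the Veronese thread from Proposition~\ref{prop: Grassmannian Gorenstein a-invariant}: writing $\omega_{G/P}^{-1}=\mathcal{L}_d^{\,m}$ so that the anticanonical section ring is $S=R_d^{(m)}$ with $a(S)=-1$, and then using Theorem~\ref{thm: Veronese Gorenstein} (which also gives $m\mid a(R_d)$) together with $a(R_d^{(m)})=\lceil a(R_d)/m\rceil$ to conclude $m=-a(R_d)$. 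Your route is a bit more direct and self-contained, needing only the standard Gorenstein fact $\omega_R\cong R(a(R))$ and its sheafification; the paper's route has the small advantage that it reuses exactly the machinery already set up in the Proposition and makes the role of the Veronese characterization explicit. Note also that your Lie-theoretic computation of $\omega_{G/P}$ is already recorded in the paper's notation list as $\omega_{G/Q_I}=\mathcal{L}(-2\rho_I)$, so you could simply cite that rather than rederive it.
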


This paper is organized as follows. In Section \ref{sec: Main Tools}, we list the main tools used to calculate the $F$-pure threshold of our rings. In Section \ref{sec: classical flags}, we describe the coordinate rings $R$ of classical flag varieties and calculate their $a$-invariant and $F$-pure threshold via a certain chain in the underlying poset of $R$, proving (\ref{item: 1}) of Theorem \ref{thm: summary of results}. In Section \ref{sec: Generalized Flag Varieties}, we focus on general flag varieties defined by algebraic groups and compute the values stated in (\ref{item: 2}), (\ref{item: 3}), \& (\ref{item: 4}) of Theorem \ref{thm: summary of results}. The values in (\ref{item: 4}) are computed using properties of Veronese subrings, while the values in (\ref{item: 2}) are obtained using the same combinatorial ideas used in Section \ref{sec: classical flags}. We also prove that general Grassmannian varieties under the Pl\"{u}cker embedding have Gorenstein coordinate rings (see Proposition \ref{prop: Grassmannian Gorenstein a-invariant}), and from the proof of this fact we are able to derive Theorem \ref{thm: root systems}, which we use to calculate the values in (\ref{item: 3}) of Theorem \ref{thm: summary of results}. 

As a final remark, all values of the $F$-pure threshold found in this article are independent of the field characteristic $p$, hence we obtain the corresponding log canonical thresholds as well, via Theorem \ref{thm: lct limit of fpt} below.

\subsection*{Acknowledgements}
I would like to thank my advisor, Uli Walther, for suggesting me this problem, and for providing helpful feedback on this paper. I also want to thank Daniel Le, whom I consulted for the algebraic group aspects of the paper, and the referee for carefully proofreading the article.  

This work was supported by NSF-grant DMS-2100288 and by Simons Foundation Collaboration Grant for Mathematicians \#580839.


\section{Background}\label{sec: Main Tools}

\emph{In this paper, all rings are assumed to have the standard grading}.

The \emph{$a$-invariant} of a positively graded Cohen-Macaulay $\Bbbk$-algebra $R$ with canonical module $\omega_R$ is defined to be
\[a(R) = -\min\{j : (\omega_R)_j \ne 0\}.\]
The $F$-pure threshold of a Gorenstein ring is known to be the following. 

\begin{theorem}\cite[Theorem 5.2]{MR3778235}\label{thm: fpt graded rings}
Let $(R,\mathfrak{m},\Bbbk)$ be a standard graded $F$-finite, $F$-pure Gorenstein $\Bbbk$-algebra. Then 
\begin{equation}
    \fpt(R)=-a(R).
\end{equation}
\end{theorem}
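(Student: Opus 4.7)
The plan is to reduce the computation of $\fpt(R)$ to a graded-degree count inside $\Hom_R(R^{1/q}, R)$, which becomes completely explicit in the Gorenstein setting. First I would use that for a standard graded Gorenstein ring the canonical module satisfies $\omega_R \cong R(a(R))$. The inclusion $R \hookrightarrow R^{1/q}$ is a finite extension of Gorenstein rings, so graded duality gives $\Hom_R(R^{1/q}, \omega_R) \cong \omega_{R^{1/q}}$. Endowing $R^{1/q}$ with its natural $\tfrac{1}{q}\Z$-grading, the Frobenius ring isomorphism $R \cong R^{1/q}$ scales degrees by $1/q$, so $\omega_{R^{1/q}} \cong R^{1/q}(a(R)/q)$. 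Combining these yields a graded isomorphism
\[
\Hom_R(R^{1/q}, R) \;\cong\; R^{1/q}\!\left(-\tfrac{a(R)(q-1)}{q}\right),
\]
which is free of rank one, with a generator $\Phi$ of map-degree $a(R)(q-1)/q$.

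Next I would translate the defining condition for $\fpt(R)$. A splitting of $f^{1/q}R \hookrightarrow R^{1/q}$ exists precisely when some $\phi \in \Hom_R(R^{1/q},R)$ sends $f^{1/q}$ to $1$; in the Gorenstein case every such $\phi$ has the form $v \mapsto \Phi(uv)$ for some $u \in R^{1/q}$, so the condition reads $\Phi(uf^{1/q}) = 1$. For the output to lie in $R_0$, degrees must match:
\[
\deg(u) + \tfrac{1}{q}\deg(f) \;=\; -\tfrac{a(R)(q-1)}{q}.
\]
Since $\deg(u) \ge 0$, this forces $\deg(f) \le -a(R)(q-1)$, and applied to $f \in \mathfrak{m}^{\lfloor (q-1)t\rfloor}$ gives $\lfloor (q-1)t\rfloor \le -a(R)(q-1)$. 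Passing to $q \to \infty$ yields $\fpt(R) \le -a(R)$.

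For the reverse inequality I would invoke the hypothesis that $R$ is $F$-pure, which guarantees that $\Phi$ itself realizes a splitting of $R \hookrightarrow R^{1/q}$, i.e.\ $\Phi(u_0)=1$ for some $u_0 \in R^{1/q}$. Combining this with the standard fact that the socle of the Artinian Gorenstein quotient $R/\mathfrak{m}^{[q]}$ lives in degree $-a(R)(q-1)$, I would produce, for each large $q$, an element $f \in \mathfrak{m}^{-a(R)(q-1)}$ together with $u \in R^{1/q}$ such that $uf^{1/q}$ lies in the unique degree where $\Phi$ hits a unit, giving $\Phi(uf^{1/q}) = 1$. This shows $(R,\mathfrak{m}^{-a(R)})$ is $F$-pure, so $\fpt(R) \ge -a(R)$. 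The main obstacle I expect is precisely this last step: the upper bound is bookkeeping on graded shifts, whereas here one must combine the socle structure of $R/\mathfrak{m}^{[q]}$ with the non-degeneracy coming from $F$-purity to ensure that the specific products $uf^{1/q}$ one writes down are not annihilated by $\Phi$.
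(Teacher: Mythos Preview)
The paper does not prove this statement at all: Theorem~\ref{thm: fpt graded rings} is quoted verbatim as \cite[Theorem 5.2]{MR3778235} and used as a black box throughout, so there is no argument in the paper to compare your proposal against.

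That said, your outline is essentially the standard proof of this result. The graded duality computation $\Hom_R(R^{1/q},R)\cong R^{1/q}\bigl(-a(R)(q-1)/q\bigr)$ is correct, and the degree bookkeeping gives the inequality $\fpt(R)\le -a(R)$ cleanly. For the reverse inequality your instinct is right that the issue is locating a specific $f\in\mathfrak{m}^{-a(R)(q-1)}$ with $\Phi(f^{1/q})$ a unit; the clean way to finish is to note that the generator $\Phi$ is surjective (by $F$-purity) and kills $(\mathfrak{m}^{[q]})^{1/q}$, hence factors through the length-one socle of $R^{1/q}/(\mathfrak{m}^{[q]})^{1/q}$, which sits in degree $-a(R)(q-1)/q$. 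Any homogeneous lift $f$ of a socle generator then lies in $\mathfrak{m}^{-a(R)(q-1)}\setminus\mathfrak{m}^{[q]}$ and satisfies $\Phi(f^{1/q})\in\Bbbk^\times$, giving the required splitting. This removes the vagueness you flagged in your last paragraph.
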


The $F$-pure threshold can be used to approximate the log canonical threshold in the following way.

\begin{theorem}\cite[Theorem 3.4]{MR2185754}\label{thm: lct limit of fpt}
Let $\mathfrak{a}$ be a homogeneous ideal of $\Z[x_1,\dots,x_n]$. For each prime $p$, let $\mathfrak{a}_p$ denote the image of $\mathfrak{a}$ in $\F_p[x_1,\dots,x_n]$. Then
\begin{equation}
\lct(\mathfrak{a}_{\C})=\lim_{p\to\infty}\fpt(\mathfrak{a}_p),
\end{equation}
where $\mathfrak{a}_{\C}:=\mathfrak{a}\cdot\C[x_1,\dots,x_n]$.
\end{theorem}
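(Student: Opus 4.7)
The plan is to establish both inequalities $\limsup_{p\to\infty}\fpt(\mathfrak{a}_p) \le \lct(\mathfrak{a}_\C)$ and $\liminf_{p\to\infty}\fpt(\mathfrak{a}_p) \ge \lct(\mathfrak{a}_\C)$ by translating the equality of real-valued thresholds into statements about ideals, using the standard characterizations
\[\lct(\mathfrak{a}_\C) = \sup\{t\ge 0 : \mathcal{J}(\mathfrak{a}_\C^t) = \C[x_1,\dots,x_n]\}, \quad \fpt(\mathfrak{a}_p) = \sup\{t\ge 0 : \tau(\mathfrak{a}_p^t) = \F_p[x_1,\dots,x_n]\},\]
where $\mathcal{J}$ denotes the multiplier ideal and $\tau$ denotes the (generalized) test ideal. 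The main tool is then the Hara--Yoshida comparison theorem: for any fixed rational $t\ge 0$ and all $p\gg 0$ (the bound depending on $t$), the reduction mod $p$ of $\mathcal{J}(\mathfrak{a}_\C^t)$ agrees with $\tau(\mathfrak{a}_p^t)$.

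Granting this, the upper bound proceeds as follows. I would pick a rational $t > \lct(\mathfrak{a}_\C)$, so that $\mathcal{J}(\mathfrak{a}_\C^t)$ is a proper ideal of $\C[x_1,\dots,x_n]$. Spreading out over $\spec \Z$, this ideal remains proper after reduction modulo $p$ for $p$ in a dense open set of primes. By Hara--Yoshida, this reduction equals $\tau(\mathfrak{a}_p^t)$, so $\tau(\mathfrak{a}_p^t)$ is proper for $p\gg 0$, forcing $\fpt(\mathfrak{a}_p)\le t$. Letting $t$ decrease to $\lct(\mathfrak{a}_\C)$ through rationals then yields the $\limsup$ bound. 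The lower bound is symmetric: for rational $t<\lct(\mathfrak{a}_\C)$ one has $\mathcal{J}(\mathfrak{a}_\C^t) = \C[x_1,\dots,x_n]$, so Hara--Yoshida gives $\tau(\mathfrak{a}_p^t) = \F_p[x_1,\dots,x_n]$ for $p\gg 0$, hence $\fpt(\mathfrak{a}_p)\ge t$, and letting $t\uparrow \lct(\mathfrak{a}_\C)$ finishes.

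The main obstacle is the Hara--Yoshida comparison theorem itself, whose proof rests on a delicate analysis of the Frobenius action on local cohomology, together with a compatibility between the description of multiplier ideals via a log resolution and the behavior of test ideals after reduction mod $p$. A secondary subtlety is that the bound "$p\gg 0$" in Hara--Yoshida depends on the chosen $t$: to conclude a statement about $\lim_{p\to\infty}\fpt(\mathfrak{a}_p)$, one must apply the theorem along a sequence of rationals $t_k \to \lct(\mathfrak{a}_\C)$, obtaining a sequence of prime bounds $p_k$, and then diagonalize. Since only countably many values of $t$ are needed and each excludes only finitely many primes, this causes no real difficulty.
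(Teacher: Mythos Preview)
The paper does not give its own proof of this theorem; it is quoted as \cite[Theorem 3.4]{MR2185754} and used as a black box. So there is nothing in the paper to compare your argument against.

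That said, your outline is the standard route to this result and is essentially what one finds in the cited source: characterize $\lct$ and $\fpt$ as jumping numbers for multiplier and test ideals respectively, invoke the Hara--Yoshida comparison $\mathcal{J}(\mathfrak{a}^t)_p = \tau(\mathfrak{a}_p^t)$ for fixed rational $t$ and $p\gg 0$, and then sandwich by letting $t$ approach $\lct(\mathfrak{a}_\C)$ from both sides through rationals. Your remarks about the $t$-dependence of the prime bound and the need to diagonalize over countably many $t_k$ are accurate and handled exactly as you say. One small point worth making explicit in the upper-bound direction: when $\mathcal{J}(\mathfrak{a}_\C^t)$ is proper, you should use that multiplier ideals are computed from a log resolution defined over a finitely generated $\Z$-subalgebra, so the properness (indeed the exact ideal) persists after reduction mod $p$ for all but finitely many $p$; you allude to this with ``spreading out,'' which is fine.
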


\subsection{Algebra with Straightening Laws}

We give the definition of a algebra with straightening laws as found in \cite{MR680936} and \cite{MR986492}. However, we use the term algebra with straightening law to refer to what \cite{MR680936} calls the ordinal Hodge algebra.

\begin{definition}
Let $B$ be a commutative ring, and $A$ be a $B$-algebra where there is a finite subset $\Pi\subset A$ with partial order $\le$ (so $\Pi$ is a poset). Then $A$ is a graded \emph{algebra with straightening laws} (ASL for short) on $\Pi$ over $B$ if the following conditions are satisfied:
\begin{enumerate}
    \item [(1)] $A$ is a positively graded $B$-algebra, and $\Pi$ consists of homogeneous elements of positive degree which generate $A$ over $B$.

    \item[(2)] The set of \emph{standard monomials} $\{\xi_1\cdots\xi_r \mid \text{$\xi_i\in\Pi$ and $\xi_1\le\cdots\le\xi_r$},\ r\in\N\}$ are linearly independent over $B$.

    \item[(3)] For every pair of incomparable elements $\delta,\tau$ in $\Pi$, the product $\delta\tau$ is a linear combination of standard monomials
    \[\delta\tau = \sum_ib_i\xi_{i1}\cdots\xi_{ir_i}, \quad b_i\in B\setminus\{0\}, \quad \xi_{i1}\le\cdots\le\xi_{ir_i},\]
    with $\xi_{i1}<\tau$ and $\xi_{i1}<\delta$ for all $i$. These expressions are called the \emph{straightening relations}.
\end{enumerate}
\end{definition}

Next, we discuss a closed formula for the $a$-invariant of a graded ASL $R$ on a poset $\Pi$ over a field $\Bbbk$, which was first given by Bruns and Herzog in \cite{MR1188581}. In this paper, it is enough to assume $\Pi$ is a distributive lattice. See \cite{MR1188581} for the general conditions on $\Pi$. Given $x,y\in\Pi$, we say $y$ is a \emph{cover} of $x$ if $x<y$, and there is no $z\in\Pi$ such that $x<z<y$. The \emph{principal chain} of a lattice $\Pi$ is a chain $\mathcal{P}(\Pi)=\xi_1,\dots,\xi_m$ formed inductively as follows, $\xi_1$ is the unique minimal element of $\Pi$. If $\xi_1,\xi_2,\dots,\xi_i$ is defined and $\xi_i$ is not the unique maximal element of $\Pi$,
\[\xi_{i+1}=\nu_1\sqcup\cdots\sqcup\nu_v,\]
where $\nu_1,\dots,\nu_v$ are elements which cover $\xi_i$, and $\sqcup$ denotes the join in $\Pi$. If $\xi_i$ is the maximal element of $\Pi$, then the inductive construction terminates and $\xi_1,\dots,\xi_i$ is the principal chain. By using the principal chain, the $a$-invariant of an ASL is given as follows.

\begin{theorem}\cite[Theorem 1.1]{MR1188581}\label{thm: ASL a-invariant}
Let $R$ be a graded ASL over $\Bbbk$ on a distributive lattice $\Pi$ with principal chain $\mathcal{P}(\Pi)=\xi_1,\dots,\xi_m$. Then its $a$-invariant is given by
\begin{equation}\label{equ: a-invariant of schubert cycle}
    a(R) = -\sum_{i=1}^m\deg\xi_i.
\end{equation}
\end{theorem}

\begin{remark}\label{rem: principal chain standard grading}
Since we assume the rings in this paper have the standard grading, each $\deg\xi_i=1$, therefore (\ref{equ: a-invariant of schubert cycle}) becomes
\[a(R) = -m = -|\mathcal{P}(\Pi)|,\]
where $|\mathcal{P}(\Pi)|$ is the cardinality of the principal chain of $\Pi$.  
\end{remark}

\section{Flag Varieties in a Vector Space}\label{sec: classical flags}

Let $V$ be an $n$-dimensional vector space over a field $\Bbbk$. A \emph{flag} in $V$ is a sequence $(V_1,\dots,V_r)$ of proper subspaces of $V$ such that $V_1\subset\cdots\subset V_r$. If $d_i=\dim_\Bbbk V_i$, then $1\le d_1<\cdots<d_r<n$, and we say the flag $(V_1,\dots,V_r)$ has signature $(d_1,\dots,d_r)$.
If each $\dim_{\Bbbk}V_i=i$, we say $(V_1,\dots,V_r)$ is a \emph{complete flag}, otherwise it is called a \emph{partial flag}. The \emph{partial flag variety} $\Fl(d_1,\dots,d_r)$ is defined as the space of all partial flags of signature $(d_1,\dots,d_r)$, while the space of all complete flags, $\Fl(1,2,\dots,n-1)$, is called the \emph{full flag variety}. When $r=1$ and $d=d_1$, the flag variety becomes the space of all $d$-dimensional subspaces of $V$, called the \emph{Grassmannian variety}, denoted $\gr(d,V)$.

\subsection{The Coordinate Ring of a Partial Flag Variety}
We describe the multi-homogeneous coordinate ring of $\Fl(d_1,\dots,d_r)$. The description is based on the one given in \cite[see Section 17]{MR680936} for the full flag case $\Fl(1,2,\dots,n-1)$.

Given a $m\times n$ generic matrix $X$, a maximal minor of $X$ is represented as a tuple of its column indices $[a_1,\dots,a_m]$ with $1\le a_1<\cdots<a_m\le n$. Now, fix a list of integers $1\le d_1<\cdots<d_r<n$, and let $X=(x_{kl})$ be a $d_r\times n$ generic matrix, and 
\[H=\{[a_1,\dots,a_{d_i}] \mid 1\le a_1<\dots<a_{d_i}\le n, \ \text{for}\ 1\le i\le r\}\]
be the set of all $d_i$-minors of $X$ for $1\le i\le r$. The set $H$ becomes a poset with respect to the partial order
\begin{equation}\label{equ: partial order flag poset}
    [a_1,\dots,a_{d_i}]\ge [b_1,\dots,b_{d_j}] \ \Leftrightarrow \ d_j\ge d_i,\ \text{and}\ a_1\ge b_1,\dots, a_{d_i}\ge b_{d_i}.
\end{equation}

The multi-homogeneous coordinate ring of the flag variety $\Fl(d_1,\dots,d_r)$ with respect to the embedding
\begin{equation}\label{equ: flag embedding}
    \Fl(d_1,\dots,d_r) \hookrightarrow \prod_{i=1}^r\gr(d_i,V) \hookrightarrow \prod_{i=1}^r\P\left(\bigwedge^{d_i} V\right),
\end{equation}
where each 
\[\gr(d_i,V) \hookrightarrow \P\left(\bigwedge^{d_i} V\right)=\P^{\binom{n}{d_i}-1}\]
is the \emph{Pl\"{u}cker embedding}, is the $\Bbbk$-subalgebra $R$ of the polynomial ring $\Bbbk[X]=\Bbbk[x_{kl} \mid 1\le k\le d_r,\ 1\le l\le n]$ generated by $H$. Note that since $R$ is assumed to have the standard grading, $\deg h=1$ for all $h\in H$.

Since $H$ is a subposet of the poset of all size minors of $X$, denoted as $\Delta(X)$, and $\Delta(X)$ is a distributive lattice (see \cite[before Lemma 4.10]{MR986492}), it follows that $H$ is a distributive lattice as well. 

\begin{remark}\label{rem: union poset tuples}
    Fix $d,n\in\N$ with $d\le n$, and define the set of $d$-tuples of integers
    \begin{equation}\label{equ: tuple poset}
        I(d,n)=\{(a_1,\dots,a_d) \mid 1\le a_1<\dots<a_d\le n\},
    \end{equation}
    which is a poset under the natural partial order
    \[(a_1,\dots,a_d)\ge (b_1,\dots,b_d) \ \Leftrightarrow \ a_1\ge b_1,\dots, a_d\ge b_d.\]
    
    Fix $1\le d_1<\cdots<d_r<n$ as before. The poset $H$ is in bijective correspondence with the set
    \begin{equation}\label{equ: union tuple sets}
        \bigcup_{i=1}^rI(d_i,n),
    \end{equation}
    where each $I(d_i,n)$ is defined similarly in (\ref{equ: tuple poset}). The set (\ref{equ: union tuple sets}) has the same partial order as $H$, described by (\ref{equ: partial order flag poset}).
    Hence, from now on we identify $H$ with (\ref{equ: union tuple sets}).
\end{remark}
 
\begin{proposition}\label{prop: classic flag is F-pure}
    The multi-homogeneous coordinate ring $R$ of $\Fl(d_1,\dots,d_r)$ is an $F$-pure Gorenstein graded ASL on $H$ over $\Bbbk$. 
\end{proposition}

\begin{proof}
    The ring $R$ is a graded ASL on $H$ over $\Bbbk$ by \cite[Proposition 7.6]{MR2900150}. We note that by \cite[Theorem 7.7]{MR2900150}, the cone $\spec(R)$ over $\Fl(d_1,\dots,d_r)$ is Gorenstein, so it follows that $R$ is Gorenstein. Assuming that the field $\Bbbk$ is $F$-finite of characteristic $p>0$, since $R$ is a Gorenstein ASL it follows from \cite[Corollary 5.3]{MR4626865} that $R$ is $F$-split, and hence $R$ is $F$-pure. 
\end{proof}

\subsection{The $F$-pure Threshold of Partial Flag Varieties}
We now begin the computation of the $F$-pure threshold of the multi-homogeneous coordinate ring $R$ of the partial flag variety $\Fl(d_1,\dots,d_r)$.  

Let us first consider the Grassmannian case. When $r=1$ and $d=d_1$, $X$ is a $d\times n$ generic matrix, and $H=I(d,n)$ becomes the set of maximal minors of $X$, denoted $\Gamma(X)$. The multi-homogeneous coordinate ring $R$ becomes the homogeneous coordinate ring of the Grassmannian $\gr(d,V)$ under the Pl\"{u}cker embedding, which is denoted $G(X)$. The ring $G(X)$ is a graded ASL on $\Gamma(X)$ over $\Bbbk$. Assuming that $\Bbbk$ is $F$-finite of characteristic $p$, $G(X)$ is an $F$-pure Gorenstein ring, hence by Theorem \ref{thm: fpt graded rings} the $F$-pure threshold of $G(X)$ equals $-a(G(X))$. Since $G(X)$ is an ASL on $\Gamma(X)$, it follows from \cite[Corollary 1.4]{MR1188581} via Theorem \ref{thm: ASL a-invariant} that $-a(G(X))=n$. We immediately get the following.

\begin{proposition}\label{prop: fpt Grassmannian}
One has $\fpt(G(X))=n$.
\end{proposition}

We seek to generalize Proposition \ref{prop: fpt Grassmannian} to the partial flag case. Since we have already noted that the multi-homogeneous coordinate ring $R$ of a partial flag variety is a Gorenstein graded ASL on $H$ it is enough to compute its $a$-invariant via the principal chain of $H$, which we describe next. 

\subsection{Construction of the Principal Chain in $H$}\label{subsec: principal chain construction}
Let $1\le d_1<\cdots<d_r \le n$ be fixed integers. The join of two elements $\xi,\nu\in H=\bigcup_{i=1}^rI(d_i,n)$, where $\xi=[a_1,\dots,a_{d_i}]\in I(d_i,n)$ and $\nu=[b_1,\dots,b_{d_j}]\in I(d_j,n)$
is
\[\xi\sqcup\nu = \big[\max(a_1,b_1),\dots,\max(a_{d_i},b_{d_i})\big] \enspace\text{if $d_j\ge d_i$.}\]
We describe an algorithm that constructs the principal chain $\mathcal{P}(H)=\xi_1,\xi_2,\dots,\xi_m$ of $H$. Note that the elements of $\mathcal{P}(H)$ can be divided into each subset $I(d_i,n)$ of $H$, hence $\mathcal{P}(H)=\mathcal{C}_1\cup\cdots\cup\mathcal{C}_r$, where each $\mathcal{C}_i=\xi_1^{(d_i)}<\dots<\xi_{m_i}^{(d_i)}$ is a chain in $I(d_i,n)$ and $\mathcal{C}_r<\cdots<\mathcal{C}_1$ in $H$. 

Each subchain $\mathcal{C}_i$ is constructed as follows. If $\xi_j^{(d_i)}=[a_1,\dots,a_{d_i}]\in I(d_i,n)$, whose last $d_i-d_{i-1}$ entries are not the consecutive integers $n-(d_i-d_{i-1})+1,\dots,n$, then define $\xi_{j+1}^{(d_i)}=[b_1,\dots,b_{d_i}]\in I(d_i,n)$ by
\begin{equation}\label{equ: entry of tuple}
    b_k 
= \begin{cases}
    a_k, & \text{if $a_{k+1}=a_k+1$ or $a_k=n$} \\
    a_k+1, & \text{otherwise}
\end{cases}
\end{equation}
(In other words, $\xi_{j+1}^{(d_i)}$ is obtained from $\xi_j^{(d_i)}$ by adding 1 to all entries of $\xi_j^{(d_i)}$ that precede a `gap').

\begin{itemize}
    \item The only minimal element of $H$ is $[1,2,\dots,d_r]\in I(d_r,n)$, hence $\xi_1=[1,2,\dots,d_r]$ is the first element of $\mathcal{P}(H)$.

    \item Suppose $\xi_1,\xi_2,\dots,\xi_k$ is defined, where $\xi_k=[a_1,\dots,a_{d_i}]\in \mathcal{C}_i$ for some $1\le i\le r$, and $\xi_k\ne[n-d_1+1,\dots,n]\in I(d_1,n)$. If the last $d_i-d_{i-1}$ entries of $\xi_k$ are not $n-(d_i-d_{i-1})+1,\dots,n-1,n$, then construct the next element $\xi_{k+1}$ in $\mathcal{C}_i$ via (\ref{equ: entry of tuple}). Otherwise, drop the entries $n-(d_i-d_{i-1})+1,\dots,n-1,n$ from $\xi_k$ and apply (\ref{equ: entry of tuple}) to its remaining entries of $\xi_k$ to create $\xi_{k+1}$, which is the first element of $\mathcal{C}_{i-1}$. 

    \item The process stops after a finite number of steps when one reaches 
    \[[n-d_1+1,\dots,n-1,\ n]\in I(d_1,n),\]
    which is the only maximal element of $H$. Thus, $\xi_m=[n-d_1+1,\dots,n]$ is always be the last element of $\mathcal{P}(H)$. 
\end{itemize}

\begin{example}\label{example: general principal chain}
Consider the flag variety $\Fl(2,3,5)$ of partial flags in a $7$-dimensional vector space. So, we have $n=7,\ d_3=5,\ d_2=3,\ d_1=2$. The underlying poset of the coordinate ring $R$ of $\Fl(2,3,5)$ is $H=I(2,7)\cup I(3,7)\cup I(5,7)$. The principal chain $\mathcal{P}(H)$ is

\begin{align*}
\xi_1 &= [1,\ 2,\ 3,\ 4,\ 5], \\
\xi_2 &= [1,\ 2,\ 3,\ 4,\ \bm{6}], \\
\xi_3 &= [1,\ 2,\ 3,\ \bm{5},\ \bm{7}], \\
\xi_4 &= [1,\ 2,\ \bm{4},\ \bm{6},\ 7],\\
\xi_5 &= [1,\ \bm{3},\ \bm{5}],\\
\xi_6 &= [\bm{2},\ \bm{4},\ \bm{6}],\\
\xi_7 &= [\bm{3},\ \bm{5},\ \bm{7}],\\
\xi_8 &= [\bm{4},\ \bm{6}],\\
\xi_9 &= [\bm{5},\ \bm{7}],\\
\xi_{10} &= [\bm{6},\ 7]
\end{align*}
(where boldface means the previous entry increased by 1).
\end{example}

We introduce a bit of notation, which is used in the proof of Theorem \ref{thm: a-inv flag variety}. One may express $\xi=[a_1,\dots,a_d]\in I(d,n)$ as $\xi=[\beta_0,\dots,\beta_s]$, where each $\beta_i$ are sets of consecutive entries called \emph{blocks}. Each block $\beta_i$ is followed by a \emph{gap} $\chi_i$, defined as a block of consecutive entries \emph{not} appearing in $\xi$. If $a_d=n$, then $\chi_s$ is empty. 

\begin{example}
    Let $\xi=[1,2,3,6,8,9,10]\in I(7,12)$. Then $\xi=[\beta_0,\beta_1,\beta_2]$, where $\beta_0=(1,2,3),\ \beta_1=(6),\ \beta_2=(8,9,10)$ are the blocks, and $\chi_0=(4,5),\ \chi_1=(7),\ \chi_2=(11,12)$ are the gaps. 
\end{example}

\begin{theorem}\label{thm: a-inv flag variety}
    Let $R$ be the multi-homogeneous coordinate ring of $\Fl(d_1,\dots,d_r)$ with respect to embedding (\ref{equ: flag embedding}). The $a$-invariant of $R$ satisfies
    \[-a(R)=n+d_r-d_1.\]
\end{theorem}

\begin{proof}
    We count the elements of $\mathcal{P}(H)$.
    Fix $2\le i\le d_r$. In the principal chain of $H$, suppose that the first $i$ elements $\xi_1,\dots,\xi_i$ are defined. It follows from the algorithm for constructing $\mathcal{P}(H)$ in \ref{subsec: principal chain construction} that $\xi_i=[b_1,\dots,b_{d_k}]$ for some $1\le k\le r$, where $b_j=j$ for $1\le j\le d_r-i+1$, and $b_j>j$ otherwise. 

    In particular, suppose the first $d_r-d_1$ elements $\xi_1,\dots,\xi_{d_r-d_1}$ of $\mathcal{P}(H)$ are defined, where $\xi_{d_r-d_1}=[b_1,\dots,b_{d_k}]$ with $b_j=j$ for $1\le j\le d_1+1$, and $b_j>j$ otherwise. Express $\xi_{d_r-d_1}=[\beta_0,\beta_1,\dots,\beta_s]$ in terms of its blocks of consecutive integers $\beta_i$. The first block is $\beta_0=(1,2,\dots,d_1)$, which may be identified with $[1,2,\dots,d_1]\in I(d_1,n)$. Each block $\beta_i$ of $\xi_{d_r-d_1}$ is followed by a gap $\chi_i$ of size 1 for $0\le i\le s-1$, hence the first entry of $\beta_1$ is $d_1+2$. The construction of the remaining elements of $\mathcal{P}(H)$ starting with $\xi_{d_r-d_1}$ is equivalent to constructing the principal chain of $I(d_1,n)$, which starts at $\beta_0$ and ends at $[n+d_1-1,\dots,n-1,n]$. But we know this chain consists of $n$ elements by \cite[Corollary 1.4]{MR1188581}. Therefore, adding these $n$ elements to the first $\xi_1,\dots,\xi_{d_r-d_1}$ gives a total of $|\mathcal{P}(H)|=n+d_r-d_1$. The conclusion follows from Remark \ref{rem: principal chain standard grading}.
\end{proof}

By Proposition \ref{prop: classic flag is F-pure} and Theorems \ref{thm: fpt graded rings} \& \ref{thm: a-inv flag variety}, we immediately get the following. 

\begin{corollary}\label{cor: fpt of flag}
Let $R$ be the multi-homogeneous coordinate ring of $\Fl(d_1,\dots,d_r)$ with respect to embedding (\ref{equ: flag embedding}) over an $F$-finite field of characteristic $p$. Then one has
\begin{equation}\label{equ: fpt type A flag varieties}
    \fpt(R) = n+d_r-d_1.
\end{equation}
\end{corollary}

\section{Generalized Flag Varieties}\label{sec: Generalized Flag Varieties}

A \emph{(generalized) flag variety} is defined to be the homogeneous space $G/Q$, where $G$ is a reductive group and $Q$ is a standard parabolic subgroup. When $Q=P$ is a maximal parabolic subgroup, $G/P$ is the \emph{Grassmannian variety}. In this section, we seek to generalize our results in section \ref{sec: classical flags} for a broader class of flag varieties. 

\subsection{Algebraic Groups}
We fix the following notations and assumptions throughout the rest of this paper, where we consult \cite{MR2015057} and \cite{MR2107324} as references.  

\begin{itemize}
    \item $G\ -$ a semisimple simply connected algebraic group over $\Bbbk$;
    \item $T\ -$ a maximal split torus in $G$;
    \item $B\ -$ a Borel subgroup of $G$ containing $T$;
    \item $W\ -$ the Weyl group of $G$ with respect to $T$;
    \item $\Phi\ -$ the corresponding root system associated to the pair $(G,T)$;
    \item $\Phi^+\ -$ the set of positive roots;
    \item $\Delta\ -$ the set of simple roots;
    \item $\Phi_I:=\Z I\cap\Phi\ -$ the set of roots of $\Phi$ generated by a subset $I\subseteq\Delta$;
    \item $Q:=Q_I\ -$ the parabolic subgroup of $G$ determined by a subset $I\subseteq\Delta$. Note that $Q_\emptyset=B$, and $Q$ is called \emph{standard} if $Q$ contains a fixed $B$;
    \item $P:=P_\alpha\ -$ the maximal parabolic subgroup of $G$ associated to $\alpha\in\Delta$ (in this case, $P_\alpha=Q_I$, where $I=\Delta\setminus\{\alpha\}$);
    \item $\G_m -$ the general linear algebraic group $\GL_1$ over $\Bbbk$ (the \emph{multiplicative group});
    \item $X(T):=\Hom(T,\G_m)\ -$ the character lattice of $T$;
    \item $X^\vee(T):=\Hom(\G_m,T)\ -$ the cocharacter lattice of $T$;
    \item $\alpha^\vee\ -$ the coroot of $\alpha\in\Phi$, belongs to $X^\vee(T)$;
    \item $\langle\cdot,\cdot\rangle\ -$ the perfect paring $X(T)\times X^\vee(T) \to \Z$;
    \item $\varpi_\alpha\ -$ the fundamental weight associated to $\alpha\in\Delta$ (is also associated to $P_\alpha$), defined by $\langle\varpi_\alpha,\beta^\vee\rangle=\delta_{\alpha,\beta}$ (Kronecker delta) for $\alpha,\beta\in\Delta$;
    \item $\mathcal{L}(\lambda)\ -$ the line bundle on the flag $G/Q$ associated to $\lambda\in X(T)$;
    \item $\rho_I := \frac{1}{2}\sum_{\alpha\in\Phi^+\setminus\Phi_I}\alpha\ -$ the character in $X(T)\otimes\Q$ associated to the parabolic subgroup $Q_I$;
    \item $\omega_{G/Q_I}:=\mathcal{L}(-2\rho_I)\ -$ the canonical bundle on the flag $G/Q_I$;
    \item $X(Q_I):=\{\lambda\in X(T)\mid \langle\lambda,\alpha^\vee\rangle=0,\ \forall\alpha\in I\}\ -$ the character lattice of a parabolic subgroup $Q_I$ \cite[(4) on page 169]{MR2015057}
\end{itemize}

\begin{example}\label{ex: classic flag}
    In the special linear group $\SL_n$, which is a semisimple simply connected algebraic group, $T$ and $B$ are respectively the group of dialgonal and upper triangular matrices. The root system $\Phi$ associated to the pair $(\SL_n,T)$ is of type $\mathbf{A}_{n-1}$, with $\Delta=\{\alpha_1,\dots,\alpha_{n-1}\}$ as its set of simple roots. For an $n$-dimensional vector space $V$, we had defined in Section \ref{sec: classical flags} the partial flag variety $\Fl(d_1,\dots,d_r)$ of all partial flags of $V$ with signature $(d_1,\dots,d_r)$. The variety $\Fl(d_1,\dots,d_r)$ can be identified with the homogeneous space $\SL_n/Q$ where $Q$ is a standard parabolic subgroup determined by the set of simple roots $I=\Delta\setminus\{\alpha_{d_1},\dots,\alpha_{d_r}\}$ with $1\le d_1<\cdots<d_r\le n-1$. The parabolic subgroup $Q$ is the group of all block upper triangular matrices of the form
    \[\begin{pmatrix}
    A_1 & \ast & \cdots & \ast & \ast \\
    \vdots & \vdots & \ddots & \vdots & \vdots \\
    0 & 0 & \cdots & A_r & \ast \\
    0 & 0 & \cdots & 0 & A
    \end{pmatrix},\]
    where $A_i$ is a square matrix of size $d_i-d_{i-1}$ for $1\le i\le r$, and $A$ is a square matrix of size $n-d_r$. In the case when $I=\Delta\setminus\{\alpha_d\}$, $Q$ becomes the maximal parabolic subgroup $P_d$ (with $1\le d\le n-1$) of all block upper triangular matrices of the form
    \[\begin{pmatrix}
        \ast & \ast \\
        O & \ast
    \end{pmatrix},\]
    where $O$ is the zero matrix of size $(n-d)\times d$. Thus, the Grassmannian variety $\gr(d,V)$ we defined in Section \ref{sec: classical flags} can be identified with $\SL_n/P_d$.
\end{example}

\subsection{Embeddings Induced by Dominant Weights}\label{sec: Flag varieties under other embeddings}

The condition that $G$ is semisimple implies  
\[X(T)\cong\bigoplus_{\alpha\in\Delta}\Z\varpi_\alpha,\]
hence every weight $\lambda\in X(T)$ may be written as
$\lambda=\sum_{\alpha\in \Delta}n_{\alpha}\varpi_{\alpha}$, where 
$n_{\alpha}=\langle\lambda,\alpha^\vee\rangle$ follows from the definition of the fundamental weight $\varpi_{\alpha}$. 

Fix a parabolic subgroup $Q=Q_I$. Then one also has 
\[X(Q_I)\cong\bigoplus_{\alpha\in\Delta\setminus I}\Z\varpi_\alpha.\]
A weight $\lambda\in X(Q_I)$ is called \emph{$Q$-dominant} if $\langle\lambda,\alpha^\vee\rangle\ge0$ for all $\alpha\in\Delta\setminus I$, and $\lambda$ is called \emph{regular $Q$-dominant} if $\ge$ is replaced by the strict inequality $>$. 

The group homomorphism $X(Q_I)\to\pic(G/Q_I)$, defined by $\lambda\mapsto\mathcal{L}(\lambda)$, is an isomorphism. Hence every line bundle on $G/Q_I$ is of the form $\mathcal{L}(\lambda)$ for some $\lambda\in X(Q_I)$. Note that if $Q_I$ is maximal parabolic subgroup, then $\pic(G/Q_I)\cong\Z$. One has the important property that $\lambda\in X(Q_I)$ is $Q$-dominant if and only if $H^0(G/Q,\mathcal{L}(\lambda))\ne0$ \cite[Proposition on page 178]{MR2015057}. Also, for any $\lambda\in X(Q_I)$ one has that [$\mathcal{L}(\lambda)$ is very ample] $\Leftrightarrow$ [$\mathcal{L}(\lambda)$ is ample] $\Leftrightarrow$ [$\langle \lambda,\alpha^{\vee}\rangle>0$ for all $\alpha\in \Delta\setminus I$, i.e., $\lambda$ is regular $Q$-dominant] (for the first equivalence, see \cite[Exercise 3.1.E.(1)]{MR2107324}, and for the second, see \cite[Remarks (1), page 204]{MR2015057}). 

Let $G$ be a semisimple simply connected algebraic group over $\Bbbk$, fix a maximal torus and Borel subgroup $T\subseteq B$, and let $Q\supseteq B$ be a parabolic subgroup.
Recall that a (very) ample line bundle on a flag variety $G/Q$ takes the form $\mathcal{L}(\lambda)$, for some regular $Q$-dominant weight $\lambda$. This induces the natural embedding 
\begin{equation}\label{equ: standard embedding}
    G/Q \hookrightarrow\P(H^0(G/Q,\mathcal{L}(\lambda))^*),
\end{equation}
in which $G/Q$ is projectively normal under \cite[Theorem 1]{MR778124}, hence the homogeneous coordinate ring of $G/Q$ under this embedding is the section ring
\begin{equation}\label{equ: section ring, dom.wt.}
    R_{\lambda} := \bigoplus_{n\ge0}H^0(G/Q,\mathcal{L}(n\lambda)).
\end{equation}
By projective normality of $G/Q$, $R_\lambda$ is normal and generated in degree 1. In addition, it is Cohen-Macaulay by \cite[Theorem 5]{MR788411}. 
We make the verification that $R_{\lambda}$ is $F$-pure, which is immediate from results in the literature. 

\subsection{The $F$-pure Threshold of the Section Rings $R_\lambda$}

One says a projective variety $X$ over a field $\Bbbk$ of characteristic $p$ is \emph{$F$-split} if the Frobenius map $\mathcal{O}_X\to F_*\mathcal{O}_X, \ f\mapsto f^p$ splits as a map of $\mathcal{O}_X$-modules.

\begin{lemma}\label{lem: flags are F-pure}
For every regular $Q$-dominant weight $\lambda$, the section ring $R_\lambda$ of a flag variety $G/Q$ with respect to $\mathcal{L}(\lambda)$ over a field $\Bbbk$ of characteristic $p$ is $F$-pure.
\end{lemma}

\begin{proof}
By \cite[Theorem 2]{MR799251}, for any standard parabolic subgroup $Q$\textsuperscript{1}\footnote{\textsuperscript{1}One requires the parabolic subgroup $Q$ to be a reduced subscheme, otherwise the flag variety $G/Q$ is not $F$-split (see \cite{MR1250534}). All parabolic subgroups in our examples are reduced, so we always make this assumption about $Q$.} of $G$, the flag variety $G/Q$ over an algebraically closed field is $F$-split. Since the line bundles $\mathcal{L}(\lambda)$ for regular $Q$-dominant weights $\lambda$ are very ample, it follows from \cite[Proposition 3.1]{MR1786505} that the section rings $R_\lambda$ of $G/Q$ are $F$-pure over an algebraically closed field. It follows that if $R_\lambda$ is defined over an arbitrary field $\Bbbk$ of positive characteristic, by taking its algebraic closure $\overline{\Bbbk}$, we see that $R_\lambda\otimes_\Bbbk\overline{\Bbbk}$ is $F$-pure, which implies that $R_\lambda$ is $F$-pure (see \cite[Corollary 7.1.22]{MR4627943}).  
\end{proof}

There is a useful characterization of the Gorensteinness of Veronese rings that we need, stated as follows:

\begin{theorem}\cite[Corollary 3.1.5, Theorem 3.2.1]{MR494707}
\label{thm: Veronese Gorenstein}
Suppose that $R$ is a Cohen-Macaulay standard graded $\Bbbk$-algebra of $\dim R\ge2$. The $n$\textsuperscript{th} Veronese $R^{(n)}$ is Gorenstein if and only if $R$ is Gorenstein and $a(R)\equiv 0 \Mod{n}$.
\end{theorem}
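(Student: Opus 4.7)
The plan is to argue via the graded canonical module $\omega_R$, exploiting the identification $\omega_{R^{(n)}} \cong (\omega_R)^{(n)}$ of graded $R^{(n)}$-modules, which holds under our hypotheses (Cohen--Macaulay, standard graded, $\dim R \ge 2$). This identification follows from graded local duality: since $R$ is a finite $R^{(n)}$-module, $H^d_{\mathfrak{m}}(R)$ computed over $R$ and over $R^{(n)}$ agree as graded abelian groups (after compatible regrading by $n$), and graded Matlis duality commutes with the Veronese functor. I will also use the standard characterization that a Cohen--Macaulay standard graded $\Bbbk$-algebra $R$ is Gorenstein if and only if $\omega_R \cong R(a(R))$ as graded $R$-modules.

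For the $(\Leftarrow)$ direction, suppose $R$ is Gorenstein with $a(R) = nc$ for some $c \in \Z$. Then $\omega_R \cong R(nc)$, and a direct computation on graded pieces gives
\[
(\omega_R)^{(n)}_i \;=\; R_{ni+nc} \;=\; R^{(n)}_{i+c} \;=\; R^{(n)}(c)_i,
\]
so $\omega_{R^{(n)}} \cong R^{(n)}(c)$. This shows $R^{(n)}$ is Gorenstein with $a(R^{(n)}) = c = a(R)/n$.

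For the $(\Rightarrow)$ direction, suppose $R^{(n)}$ is Gorenstein, so $\omega_{R^{(n)}} \cong R^{(n)}(c)$ for some integer $c$, and hence $(\omega_R)^{(n)} \cong R^{(n)}(c)$ as graded $R^{(n)}$-modules. The task is to lift this cyclic free structure on the $n$th Veronese of $\omega_R$ to an isomorphism $\omega_R \cong R(nc)$ over $R$. My plan is to sheafify on $X := \operatorname{Proj}(R) = \operatorname{Proj}(R^{(n)})$: Cohen--Macaulayness together with $\dim R \ge 2$ yields $\operatorname{depth}_{\mathfrak{m}}(R) \ge 2$, and the graded local cohomology exact sequence then gives $R \cong \bigoplus_{i} H^0(X, \mathcal{O}_X(i))$ and, after identifying $\widetilde{\omega_R}$ with $\omega_X$, also $\omega_R \cong \bigoplus_{i} H^0(X, \omega_X(i))$. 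The assumed isomorphism $(\omega_R)^{(n)} \cong R^{(n)}(c)$ sheafifies to $\omega_X \cong \mathcal{O}_X(nc)$ in $\operatorname{Pic}(X)$, since under the polarization coming from $R$ one has $\widetilde{R^{(n)}(c)} = \mathcal{O}_X(nc)$. Reassembling global sections of the twists restores $\omega_R \cong R(nc)$ over $R$, which gives Gorensteinness of $R$ with $a(R) = nc \equiv 0 \pmod{n}$.

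The main obstacle is the $(\Rightarrow)$ direction, specifically recovering $\omega_R$ as a graded $R$-module from its sheafification. The hypothesis $\dim R \ge 2$ is indispensable here: combined with Cohen--Macaulayness it forces $\operatorname{depth}_{\mathfrak{m}}(R) \ge 2$, which is precisely what makes $R$ and $\omega_R$ coincide with their saturations $\bigoplus_i H^0(X, \mathcal{O}_X(i))$ and $\bigoplus_i H^0(X, \omega_X(i))$, so that an isomorphism of sheaves on $X$ pulls back to a genuine isomorphism of graded $R$-modules. Without $\operatorname{depth} \ge 2$, the vanishing $H^1_{\mathfrak{m}}(R) = 0$ can fail, the natural maps to the section rings cease to be isomorphisms, and the sheaf-theoretic bookkeeping no longer encodes module-theoretic data.
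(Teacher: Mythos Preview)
The paper does not give its own proof of this theorem; it is quoted verbatim as a known result from the cited reference (Goto--Watanabe, \emph{On graded rings, I}). So there is no in-paper argument to compare against, and your proposal should be judged on its own merits.

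Your argument is essentially correct and is in the same spirit as the original source: the identity $\omega_{R^{(n)}}\cong(\omega_R)^{(n)}$ you invoke is exactly Corollary~3.1.3 of Goto--Watanabe, and the characterization of Gorensteinness via $\omega_R\cong R(a(R))$ is their Proposition~2.1.3. Your $(\Leftarrow)$ direction is the straightforward computation. For $(\Rightarrow)$, your passage through $X=\operatorname{Proj}(R)$ is a clean way to lift the cyclic structure on the Veronese of $\omega_R$ back to $\omega_R$ itself; Goto--Watanabe phrase things more module-theoretically, but the content is the same. One small point you should make explicit: to conclude $\omega_R\cong\bigoplus_i H^0(X,\omega_X(i))$ you need $\operatorname{depth}_{\mathfrak m}(\omega_R)\ge 2$, not just $\operatorname{depth}_{\mathfrak m}(R)\ge 2$. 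This holds because $\omega_R$ is a maximal Cohen--Macaulay $R$-module when $R$ is Cohen--Macaulay, hence $\operatorname{depth}_{\mathfrak m}(\omega_R)=\dim R\ge 2$; but it deserves a sentence, since without it the saturation step for $\omega_R$ is unjustified.
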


We shall use this fact to compute the $F$-pure thresholds of some $R_\lambda$, which also includes one that is \emph{not} a Gorenstein ring whose value is strictly rational.

\begin{proposition}\label{prop: non-gorenstein flag}
Suppose the underlying field is $F$-finite of characteristic $p$. Let $I\subseteq\Delta$ be a subset, $Q=Q_I$ be the corresponding parabolic subgroup, and $\rho_I$ be the associated dominant weight. If $I\ne\Delta\setminus\{\alpha\}$ for any $\alpha\in\Delta$ (so $Q$ is \emph{not} a maximal parabolic subgroup), one has the following:
\begin{enumerate}
    \item [(1)] The section ring (\ref{equ: section ring, dom.wt.}) for $\lambda=\rho_I$, that is $R_{\rho_I}$, of $G/Q$ (not a Grassmannian) is Gorenstein.

    \item [(2)] Let $\lambda=n\rho_I$, for any $n\in\N$. Then $R_\lambda$ is $F$-pure. 

    \item [(3)] $\fpt(R_{\rho_I})=2$.

    \item [(4)] If $\lambda=n\rho_I$, for some $n\in\N$, then $\fpt(R_\lambda)=2/n$. In particular, $R_\lambda$ is Gorenstein if and only if $n=1$ or 2.
\end{enumerate}
\end{proposition}

\begin{proof}
\begin{enumerate}
    \item [(1)] The section ring of the anticanonical bundle $\omega_{G/Q}^{-1}=\mathcal{L}(2\rho_I)$ is
    \[S = \bigoplus_{n\ge0}H^0(G/Q,\omega_{G/Q}^{-n}) = \bigoplus_{n\ge0}H^0(G/Q,\mathcal{L}(2n\rho_I)) = R_{\rho_I}^{(2)},\]
    which is the 2\textsuperscript{nd} Veronese of $R_{\rho_I}$. Note that $S=R_{\rho_I}^{(2)}$ is a direct summand of $R_{\rho_I}$ as an $S$-module and that $R_{\rho_I}$ is integral over $S$. It follows that $R_{\rho_I}$ being a Cohen-Macaulay ring implies that $S$ is a Cohen-Macaulay ring. The graded canonical module of $S$ is $\omega_S\cong S(-1)$, by \cite[(5.1.8)]{MR494707}, hence $S$ is Gorenstein by \cite[Proposition 2.1.3]{MR494707}. It follows from Theorem \ref{thm: Veronese Gorenstein} that $R_{\rho_I}$ is Gorenstein, and moreover $a(R_{\rho_I})$ is divisible by 2.

    \item [(2)] In the proof of part (1) we showed $\omega_S\cong S(-1)$, hence by \cite[6.2. Proposition (i)]{MR1786505}, $G/Q$ is a Fano variety, i.e., its anticanonical bundle $\omega_{G/Q}^{-1}=\mathcal{L}(2\rho_I)$ is ample. Then $\mathcal{L}(2\rho_I)$ being ample implies $2\rho_I$ is regular $Q$-dominant, hence $\rho_I$ is as well. It is easily deduced that $\lambda=n\rho_I$ is regular $Q$-dominant for any $n$. Thus, Lemma \ref{lem: flags are F-pure} implies that $R_\lambda$ is $F$-pure. 

    \item [(3)] One concludes 
    \[-1=a(S) = a\left(R_{\rho_I}^{(2)}\right) = \left\lceil\frac{a(R_{\rho_I})}{2}\right\rceil = \frac{a(R_{\rho_I})}{2},\]
    hence it follows $\fpt(R_{\rho_I})=-a(R_{\rho_I})=2$. 

    \item [(4)] One sees that $R_\lambda=R_{\rho_I}^{(n)}$ is the $n$\textsuperscript{th} Veronese of $R_{\rho_I}$, so the $F$-pure threshold of its maximal ideal is $\fpt(R_\lambda)=\frac{1}{n}\fpt(R_{\rho_I})=\frac{2}{n}$. The Gorenstein statement follows from Theorem \ref{thm: Veronese Gorenstein}. 
\end{enumerate}
\end{proof}

\subsection{General Grassmannians}

Let $\Delta=\{\alpha_1,\dots,\alpha_r\}$ be an enumeration of the simple roots. For some $1\le d\le r$, let $P=P_d$ be the maximal parabolic subgroup of $G$ determined by $I=\Delta\setminus\{\alpha_d\}$, let $\varpi_d:=\varpi_{\alpha_d}$ be the corresponding fundamental weight, and $\mathcal{L}_d:=\mathcal{L}(\varpi_d)$, which is the ample generator of $\pic(G/P)=\Z$. When $\lambda=\varpi_d$, then (\ref{equ: standard embedding}) is the Pl\"{u}cker embedding of $G/P$ into projective space, and (\ref{equ: section ring, dom.wt.}) is the corresponding section ring 
\begin{equation}\label{equ: grass. sec. ring}
R_{\varpi_d}:=\bigoplus_{n\ge0}H^0(G/P,\mathcal{L}_d^n).
\end{equation}

\begin{proposition}\label{prop: grass. sec. ring F-pure}
    Over an $F$-finite field of characteristic $p$,
    the ring $R_{\varpi_d}$ is $F$-pure.
\end{proposition}

\begin{proof}
    The fundamental weight $\varpi_d$ is regular $P$-dominant, since $\Delta\setminus I=\{\alpha_d\}$, and hence $\langle\varpi_d,\alpha_d^\vee\rangle=\delta_{\alpha_d,\alpha_d}=1$. It follows from Lemma \ref{lem: flags are F-pure} that $R_{\varpi_d}$ is $F$-pure.
\end{proof}

We prove $R_{\varpi_d}$ is Gorenstein. In addition, it follows from the proof that the value of the $a$-invariant of $R_{\varpi_d}$ can be determined by summing a certain subset of positive roots of the root system associated to $G$. With this method one can compute the $F$-pure thresholds of the coordinate rings of some Grassmannians that are not an ASL (see subsection \ref{subsec: Computing with Root Systems} and Table \ref{table: exceptional}).

\begin{proposition}\label{prop: Grassmannian Gorenstein a-invariant}
The homogeneous coordinate ring $R_{\varpi_d}$ of the Grassmannian $G/P$ under the Pl\"{u}cker embedding is Gorenstein.
\end{proposition}

\begin{proof}
Let $\omega_{G/P}^{-1}$ be the anticanonical bundle on $G/P$. Then $\omega_{G/P}^{-1}\in\pic(G/P)=\Z $ implies $\omega_{G/P}^{-1}=\mathcal{L}_d^m$, for some $m\in\Z$ (we may assume $m>0$). If $S$ is the corresponding section ring of $\omega_{G/P}^{-1}$, then $S=R_{\varpi_d}^{(m)}$ is the $m$\textsuperscript{th} Veronese of $R_{\varpi_d}$. Note that $S$ is a direct summand of $R_{\varpi_d}$ as an $S$-module and that $R_{\varpi_d}$ is integral over $S$. It follows that $R_{\varpi_d}$ being a Cohen-Macaulay ring implies that $S$ is a Cohen-Macaulay ring. By \cite[(5.1.8)]{MR494707}, the graded canonical module of $S$ is $\omega_S = \bigoplus_{n\in\Z}H^0(G/P,\omega_{G/P}^{-n-1}) \cong S(-1)$ as graded rings, hence $S=R_{\varpi_d}^{(m)}$ is Gorenstein by \cite[Proposition 2.1.3]{MR494707}. Therefore, $R_{\varpi_d}$ is Gorenstein by Theorem \ref{thm: Veronese Gorenstein}. 
\end{proof}

\begin{corollary}\label{cor: a-invariant via root systems}
Let $\Phi$ be the root system associated to $G$. Continuing the assumptions of this subsection, the $a$-invariant $a(R_{\varpi_d})$ of the coordinate ring $R_{\varpi_d}$ of $G/P$ satisfies 
\begin{equation}\label{equ: a-invariant root system}
    -a(R_{\varpi_d})\varpi_d = \sum_{\alpha\in\Phi^+\setminus\Phi_I}\alpha.
\end{equation}
\end{corollary}

\begin{proof}
In the conclusion of the proof of Proposition \ref{prop: Grassmannian Gorenstein a-invariant}, Theorem \ref{thm: Veronese Gorenstein} also says that $a(R_{\varpi_d})\equiv 0 \Mod{m}$. Since $S=R_{\varpi_d}^{(m)}$ is Gorenstein, $\omega_S\cong S(-1)$ implies $a(S)=-1$. One has
\[-1 = a(S) = a\left(R_{\varpi_d}^{(m)}\right) = \left\lceil \frac{a(R_{\varpi_d})}{m} \right\rceil = \frac{a(R_{\varpi_d})}{m},\]
and hence $m=-a(R_{\varpi_d})$. It then follows 
\[\mathcal{L}(2\rho_I) = \omega_{G/P}^{-1} = \mathcal{L}_d^m = \mathcal{L}(m\varpi_d) = \mathcal{L}(-a(R_{\varpi_d})\varpi_d),\]
which implies $-a(R_{\varpi_d})\varpi=2\rho_I$, since the map $\lambda\mapsto\mathcal{L}(\lambda)$ is one-to-one. Equation (\ref{equ: a-invariant root system}) immediately follows by recalling that $\rho_I:= \frac{1}{2}\sum_{\alpha\in\Phi^+\setminus\Phi_I}\alpha$.
\end{proof}

We are also able to compute the $F$-pure threshold of non-Gorenstein coordinate rings of the Grassmannian $G/P$, which are strictly rational numbers. 

\begin{proposition}\label{prop: non-gorenstein grassmannians}
Suppose the underlying field is $F$-finite of characteristic $p$. For a regular $P$-dominant weight $\lambda$, one has
\begin{equation}
    \fpt(R_\lambda) = \frac{-a(R_{\varpi_d})}{\langle\lambda,\alpha_d^\vee\rangle}.
\end{equation}
Moreover, $R_\lambda$ is Gorenstein if and only if $a(R_{\varpi_d})\equiv 0 \Mod{\langle\lambda,\alpha_d^\vee\rangle}$.
\end{proposition}

\begin{proof}
Every regular $P$-dominant weight is of the form $\lambda=n_d\varpi_d$, where $n_d=\langle \lambda,\alpha_d^\vee\rangle>0$, hence $\mathcal{L}(\lambda)=\mathcal{L}_d^{n_d}$. Therefore, $R_\lambda = R_{\varpi_d}^{(n_d)}$ is a Veronese subring of $R_{\varpi_d}$. The $F$-pure threshold of a Veronese ring is $\fpt(R^{(n)})=\frac{1}{n}\fpt(R)$ \cite[Proposition 2.2.(2)]{MR2097584}.
By Lemma \ref{lem: flags are F-pure} and Proposition \ref{prop: Grassmannian Gorenstein a-invariant} we know that $R_{\varpi_d}$ is an $F$-pure Gorenstein ring, so $\fpt(R_{\varpi_d})=-a(R_{\varpi_d})$, and it follows that
\[\fpt(R_\lambda) = \fpt\left(R_{\varpi_d}^{(n_d)}\right) = \frac{\fpt(R_{\varpi_d})}{n_d}=\frac{-a(R_{\varpi_d})}{\langle\lambda,\alpha_d^\vee\rangle} \in\Q.\]
If $R_\lambda=R_{\varpi_d}^{(n_d)}$ is Gorenstein, then Theorem \ref{thm: Veronese Gorenstein} implies $a(R_{\varpi_d})\equiv 0 \Mod{n_d}$ (we already know that $R_{\varpi_d}$ is Gorenstein).
On the other hand, if $a(R_{\varpi_d})\equiv 0 \Mod{n_d}$, then since we know $R_{\varpi_d}$ is Gorenstein, Theorem \ref{thm: Veronese Gorenstein} implies that $R_\lambda=R_{\varpi_d}^{(n_d)}$ is Gorenstein, hence $\fpt(R_\lambda)=-a(R_\lambda)\in\N$.
\end{proof}

\subsection{Minuscule Grassmannians}\label{subsection: Minuscule Grassmannians}

We have seen in Section \ref{sec: classical flags} that the coordinate rings of classical flag varieties have the structure of an ASL. Other types of flag varieties whose coordinate rings are known to have this property are minuscule Grassmannians. Using the ASL structure we compute the $F$-pure threshold of these rings, listed in Table \ref{table: minuscule}, via the principal chain. We consult \cite{MR2416742} and \cite{MR866020} for the background on this section.

\begin{definition}
A fundamental weight $\varpi$ is called \emph{minuscule} if $\langle\varpi,\alpha^{\vee}\rangle\le1$ for all $\alpha\in\Phi^+$. The maximal parabolic subgroup $P$ associated to $\varpi$ is called a \emph{minuscule parabolic subgroup}, and the corresponding homogeneous space $G/P$ is called a \emph{minuscule Grassmannian}. 
\end{definition}

\begin{example}
First, given a general root system $\Phi$ in a Euclidean vector space $E$ with inner product $(\cdot,\cdot)$, for any two roots $\alpha,\beta\in\Phi$ define 
\[\langle\beta,\alpha^\vee\rangle := \frac{2(\beta,\alpha)}{(\alpha,\alpha)}.\]
When $\Phi$ is a root system of type $\mathbf{B}_2$, its set of positive roots is \[\Phi^+=\{\alpha_1,\alpha_2,\alpha_3,\alpha_4\}=\{e_1,\ e_2,\ e_1-e_2,\ e_1+e_2\},\]
where $e_1=(1,0)$ and $e_2=(0,1)$ are the coordinate vectors of $\R^2$. Consider the fundamental weight $\varpi_2=\frac{1}{2}(e_1+e_2)$ (the general description of these is listed in subsection \ref{subsec: Computing with Root Systems}). We see that
\[\langle\varpi_2,e_1^\vee\rangle = \frac{2\left(\frac{1}{2}(e_1+e_2),\ e_1\right)}{(e_1,\ e_1)} = 1.\]
Similarly, we get $\langle\varpi_2,e_2^\vee\rangle=1,\ \langle\varpi_2,(e_1-e_2)^\vee\rangle=\langle\varpi_2,(e_1+e_2)^\vee\rangle=0$, and hence $\varpi_2$ is a minuscule weight.  
\end{example}

One has a complete list of the minuscule weights for the irreducible root systems of the following types:
\begin{align*}
    &\text{Type $\mathbf{A}_{n-1}$} : \text{All the fundamental weights $\varpi_1,\dots,\varpi_{n-1}$}\\
    &\text{Type $\mathbf{B}_n$} : \varpi_n \\
    &\text{Type $\mathbf{C}_n$} : \varpi_1 \\
    &\text{Type $\mathbf{D}_n$} : \varpi_1,\varpi_{n-1},\varpi_n \\
    &\text{Type $\mathbf{E}_6$} : \varpi_1,\varpi_6 \\
    &\text{Type $\mathbf{E}_7$} : \varpi_7 
\end{align*}
(for a description of the these weights, see Section \ref{subsec: Computing with Root Systems}). The root systems of types $\mathbf{E}_8,\mathbf{F}_4,\mathbf{G}_2$ do not have minuscule weights.

\begin{definition}
Given a maximal parabolic subgroup $P_d$ of $G$ with associated minuscule weight $\varpi_d$, one calls 
\[\mathbf{X}_n(\varpi_d):=W/W_{P_d}\]
the \emph{minuscule lattice}, where $W_{P_d}$ is the Wyel group of $P_d$, and $\mathbf{X}_n$ denotes the type ($\mathbf{A}_\bullet, \mathbf{B}_\bullet, \mathbf{C}_\bullet, \mathbf{D}_\bullet, \mathbf{E}_\bullet$) of the root system $\Phi$.
\end{definition}

It is known that $\mathbf{X}_n(\varpi_d)$ is a distributive lattice (see \cite[Remark 5.3]{MR2416742}). A few minuscule lattices important for our purposes are described below.

\subsection*{Type $\mathbf{A}_{n-1}$}
For each $1\le d\le n-1$, the minuscule lattice associated to $\varpi_d$ is
\[\mathbf{A}_{n-1}(\varpi_d)=W_{\SL_n}/W_{P_d} = \frac{S_n}{S_d\times S_{n-d}}\cong I(d,n),\]
where $S_k$ for $k\in\N$ is the symmetric group on $k$ letters.

\subsection*{Type $\mathbf{B}_{n-1}$} The minuscule lattice associated to $\varpi_{n-1}$ is
\[\mathbf{B}_{n-1}(\varpi_{n-1}) = \left\{(i_1,\dots,i_{n-1})\in I(n-1,2n) \ \Bigg| \
\begin{aligned}
&\text{for}\ 1\le j\le n-1,\\
&\{j,\ 2n-j\}\cap\{i_1,\dots,i_{n-1}\}\ \text{is size 1}
\end{aligned}
\right\}.\]

\subsection*{Type $\mathbf{D}_{n}$}
The minuscule lattice associated to $\varpi_n$ is 
\[\mathbf{D}_{n}(\varpi_{n}) = \left\{(i_1,\dots,i_{n})\in I(n,2n) \ \Bigg| \
\begin{aligned}
&\text{for}\ 1\le j\le n, \\
&\{j,\ 2n+1-j\}\cap\{i_1,\dots,i_{n}\} \ \text{is size 1}, \\
&\mid\{j,\ 1\le j\le n\mid i_j>n\}\mid\ \text{is even}
\end{aligned} \right\}.\]
One has the lattice isomorphisms
\[\mathbf{B}_{n-1}(\varpi_{n-1}) \cong \mathbf{D}_{n}(\varpi_{n-1}) \cong \mathbf{D}_{n}(\varpi_{n}).\]

\begin{proposition}\label{prop: principal chain type B}
The cardinality of the principal chain in the minuscule lattice $\mathbf{B}_{n-1}(\varpi_{n-1})\cong \mathbf{D}_{n}(\varpi_{n})$ is $2(n-1)$, for $n\ge2$. 
\end{proposition}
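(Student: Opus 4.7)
My plan is to construct the principal chain of $\mathbf{B}_{n-1}(\varpi_{n-1})$ explicitly and count its elements, following the strategy of Proposition \ref{prop: fpt Grassmannian} and Theorem \ref{thm: fpt of flag}. The minuscule lattice has unique minimum $\xi_1=(1,2,\dots,n-1)$ (all ``small'' choices) and unique maximum $(n+1,n+2,\dots,2n-1)$ (all ``large'' choices); the allowed entries are $\{1,\dots,2n-1\}\setminus\{n\}$ (the value $n$ never appears in any tuple, since it is neither of the form $j$ nor $2n-j$ for $j\in\{1,\dots,n-1\}$), and one checks the sublattice is closed under the coordinatewise maximum inherited from $I(n-1,2n)$, so joins in the sublattice may be computed as in the ambient poset.

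I would propose the following chain-generation rule, modeled on the one in Proposition \ref{prop: fpt Grassmannian}: given $\xi_i=(a_1,\dots,a_{n-1})$, define $\xi_{i+1}=(b_1,\dots,b_{n-1})$ entry-by-entry by $b_k=a_k$ if $a_k=2n-1$ or $a_{k+1}=a_k+1$ (with the convention $a_n:=\infty$), $b_k=a_k+1$ if $a_k+1\ne n$, and $b_k=a_k+2$ if $a_k+1=n$. That is, increment every entry that precedes a gap, jumping over the forbidden value $n$ when necessary. The key claim is that this $\xi_{i+1}$ is the coordinatewise max of all covers of $\xi_i$ in the sublattice. For each $\xi_i$ I would enumerate the covers---which, unlike the type $\mathbf{A}$ case, can simultaneously change two adjacent entries, since a naive single-coordinate increment by $1$ typically breaks the mirror constraint that exactly one of $\{j,2n-j\}$ lies in the tuple---and verify that their join matches the tuple produced by the rule and still lies in the sublattice.

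For the counting, observe via the rule that the $k$-th entry must traverse the $n$-value sequence $k,k+1,\dots,n-1,n+1,\dots,n+k$ from $\xi_1$ to the maximum, moving during exactly the consecutive chain steps $n-k,n-k+1,\dots,2n-2-k$. The chain therefore spans from step $1$ (when the last entry first moves) to step $2n-3$ (when the first entry last moves), yielding $2n-3$ transitions and hence $|\mathcal{P}(\mathbf{B}_{n-1}(\varpi_{n-1}))|=2n-2=2(n-1)$. The principal obstacle is the cover-and-join analysis in the preceding step: because covers of the minuscule sublattice differ from those of $I(n-1,2n)$ and can involve simultaneous adjustments in multiple coordinates, a careful case analysis on the local structure of $\xi_i$ (in particular, whether the adjacent entries force a ``flip'' of two indices to maintain the mirror constraint) is needed to show that the coordinatewise maximum of all covers equals the tuple produced by the rule.
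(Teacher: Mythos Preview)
Your approach is sound in outline and would work, but it is genuinely different from the paper's proof. The paper does \emph{not} compute the principal chain of $\mathbf{B}_{n-1}(\varpi_{n-1})$ directly. Instead, it constructs a lattice isomorphism
\[
\theta:\mathbf{B}_{n-1}(\varpi_{n-1})\longrightarrow H_{n-1}\cup\{\hat{0},\hat{1}\},\qquad (w_1,\dots,w_{n-1})\longmapsto (w_1,\dots,w_d),
\]
where $d$ is the largest index with $w_d\le n-1$; that is, one simply truncates to the ``small'' entries, which determine the tuple completely by the mirror constraint. A cardinality count ($2^{n-1}$ on each side) shows $\theta$ is a bijection. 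The principal chain on the target is then $\mathcal{P}(H_{n-1})$ together with the added top and bottom, and the formula $|\mathcal{P}(H_{n-1})|=2(n-2)$ is already available from the type~$\mathbf{A}$ full-flag computation (the remark after Theorem~\ref{thm: fpt of flag}), giving $2(n-2)+2=2(n-1)$.

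The trade-off is this: the paper's route is shorter and sidesteps entirely the two issues you flag---closure of the sublattice under coordinatewise max, and the cover analysis with its ``flip'' moves---by transporting the problem to a lattice whose principal chain is already known. Your route gives an explicit description of the chain inside $\mathbf{B}_{n-1}(\varpi_{n-1})$ itself, which is informative, but you would still owe (i) a proof that $\mathbf{B}_{n-1}(\varpi_{n-1})$ is a sublattice of $I(n-1,2n)$ (you assert this, but it is not immediate from the definition), and (ii) the full case analysis on covers. Both are doable, but the paper's reduction avoids them. If you want to complete your argument, note that the isomorphism $\theta$ above in fact gives a clean way to verify both (i) and (ii) at once, since covers and joins in $H_{n-1}\cup\{\hat 0,\hat 1\}$ are already understood.
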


\begin{proof}
Consider the lattice $\bigcup_{d=0}^{n-1}I(d,n-1)$, where $I(0,n-1):=\{\hat{1}\}$ consists of its maximal element, and $I(n-1,n-1):=\{(1,2,\dots,n-1)\}$ consists of the $(n-1)$-length tuple, which is its minimal element $\hat{0}$. In particular, this lattice is just $H_{n-1}\cup\{\hat{0},\hat{1}\}$, where $H_{n-1}:=\bigcup_{d=1}^{n-2}I(d,n-1)$ is the type of lattice in (\ref{equ: union tuple sets}). For $w=(w_1,\dots,w_{n-1})\in\mathbf{B}_{n-1}(\varpi_{n-1})$, let $1\le d\le n-1$ be such that $w_d\le n-1$ and $w_{d+1}\ge n$. Note that $w$ is determined by $(w_1,\dots,w_d)$, so we define the map
\begin{align*}
    \theta: \mathbf{B}_{n-1}(\varpi_{n-1}) &\longrightarrow \bigcup_{d=0}^{n-1}I(d,n-1) \\
    (w_1,\dots,w_d,\dots,w_{n-1}) &\longmapsto (w_1,\dots,w_d) \\
    (1,2,\dots,n-2,n-1) &\longmapsto \hat{0} \enspace(\text{maps to itself})\\
    (1,2,\dots,n-2,n+1) &\longmapsto \hat{1}.
\end{align*}
Order is preserved in each $I(d,n)$, for $1\le d\le n-2$, hence $\theta$ is an order homomorphism. It is also clearly injective. The lattice $\mathbf{B}_{n-1}(\varpi_{n-1})$ is of size $2^{n-1}$ (which is clear by the condition on its elements), while the size of $\bigcup_{d=0}^{n-1}I(d,n-1)$ is
\[\sum_{d=0}^{n-1}| I(d,n-1)|  = \sum_{d=0}^{n-1}\binom{n-1}{d} = 2^{n-1}.\]
It follows that $\theta$ is surjective as well, hence it is an isomorphism of lattices. The principal chain in $H_{n-1}\cup\{\hat{0},\hat{1}\}$ is just the principal chain of $H_{n-1}$ with $\hat{0}$ and $\hat{1}$ added, so its cardinality is 
\[|\mathcal{P}(H_{n-1})\cup\{\hat{0},\hat{1}\})| = |\mathcal{P}(H_{n-1})|+2 = 2((n-1)-1)+2 = 2(n-1).\]
The principal chain of $\mathbf{B}_{n-1}(\varpi_{n-1})$ maps onto $\mathcal{P}(H_{n-1})\cup\{\hat{0},\hat{1}\}$ by $\theta$, hence it has the same size.
\end{proof}

The work of C.S. Seshadri in \cite{MR541023} implies that the homogeneous coordinate ring $R_{\varpi_d}$ of a minuscule Grassmannian $G/P_d$ under the Pl\"{u}cker embedding is an ASL on $\mathbf{X}_n(\varpi_d)$. The $F$-purity and Gorensteinness of $R_{\varpi_d}$ follows from Propositions \ref{prop: grass. sec. ring F-pure} and \ref{prop: Grassmannian Gorenstein a-invariant} respectively. We present the main result of this subsection.

\begin{theorem}
Let $R_{\varpi_d}$ be the homogeneous coordinate ring of the Grassmannian $G/P_d$ under the Pl\"{u}cker embedding over an $F$-finite field of characteristic $p$. We have the following table:

\begin{table}[H]
\centering 
\captionsetup{justification=centering}
\caption{The $F$-pure threshold of minuscule Grassmannians}
\begin{tabular}{ |p{1cm}|p{3cm}|p{3cm}|p{2cm}|  }
\hline
Type & Weight index $d$ & Minuscule Grassmannians & $\fpt(R_{\varpi_d})$ \\
\hline
$\mathbf{A}_{n-1}$ & $1,\dots,n-1$ & $\SL_n/P_d$ & $n$ \\
$\mathbf{B}_n$ & $n$ & $\SO_{2n+1}/P_d$  & $2n$ \\
$\mathbf{C}_{n}$ & $1$ & $\Sp_{2n}/P_d$ & $2n$ \\
$\mathbf{D}_{n}$ & $1,n-1,n$ & $\SO_{2n}/P_d$ & $2(n-1)$ \\
$\mathbf{E}_6$ & $1,6$ & $E_6/P_d$ & $12$ \\
$\mathbf{E}_7$ & $7$ & $E_7/P_d$ & $18$ \\
\hline
\end{tabular}
\label{table: minuscule}
\end{table}
\end{theorem}

\begin{proof}
We compute all the values of Table \ref{table: minuscule} using the formula 
\[\fpt(R_{\varpi_d})=-a(R_{\varpi_d})=\text{size of principal chain in $\mathbf{X}_n(\varpi_d)$}.\]
Note that for types $\mathbf{C}_n$ and $\mathbf{D}_n$ of weight index $d=1$, $R_{\varpi_d}$ is simply enough that the $a$-invariant can be computed directly.  
\begin{itemize}
    \item Type $\mathbf{A}_{n-1}$: Since $\SL_n/P_d\cong\gr(d,\Bbbk^n)$ (see Example \ref{ex: classic flag}), then $R_{\varpi_d}=G(X)$, and this is Proposition \ref{prop: fpt Grassmannian}.
    
    \item Type $\mathbf{B}_n$: This follows from Proposition \ref{prop: principal chain type B}.
    
     \item Type $\mathbf{C}_n$: Since $\Sp_{2n}/P_1$ is the projective space $\P^{2n-1}$, its coordinate ring is the polynomial ring $R_{\varpi_1}=\Bbbk[x_0,\dots,x_{2n-1}]$. Hence $\fpt(R_{\varpi_1})=\dim R_{\varpi_1}=2n$.
     
     \item Type $\mathbf{D}_n$: For the weight indices $d=n-1,n$ recall the lattice isomorphisms $\mathbf{D}_{n}(\varpi_{n-1}) \cong \mathbf{D}_{n}(\varpi_{n})\cong\mathbf{B}_{n-1}(\varpi_{n-1})$. Proposition \ref{prop: principal chain type B} then gives the value of the $F$-pure threshold of $R_{\varpi_{n-1}}$ and $R_{\varpi_n}$. 
     
     For weight index $d=1$, the Grassmannian $\SO_{2n}/P_1$ is defined by the quadratic $f=\sum_{i=1}^nx_iy_{n+1-i}$, hence its coordinate ring is the hypersurface ring
     \[ R_{\varpi_1} =\frac{S}{(f)} =\frac{\Bbbk[x_1,\dots,x_n,y_1,\dots,y_n]}{\left(\sum_{i=1}^nx_iy_{n+1-i}\right)}.\]
     Its $a$-invariant is $a(R_{\varpi_1})=a(S)+\deg f = -2n+2$, so $\fpt(R_{\varpi_1})=2(n-1)$.
     
     \item Types $\mathbf{E}_6$ and $\mathbf{E}_7$:
     The Hasse diagrams of $\mathbf{E}_6(\varpi_6)\cong \mathbf{E}_6(\varpi_1)$ and $\mathbf{E}_7(\varpi_7)$ can be found in \cite[figure 1]{MR782055}. From the diagrams one can conclude that the size of the
     principal chains of $\mathbf{E}_{6}(\varpi _{6})$ and $\mathbf{E}_{7}(\varpi _{7})$ are 12 and 18 respectively.
\end{itemize}
\end{proof}

\subsection{Computing With Root Systems}\label{subsec: Computing with Root Systems}

We use formula (\ref{equ: a-invariant root system}) of Corollary \ref{cor: a-invariant via root systems} to compute the $a$-invariant, and hence the $F$-pure threshold, of $R_{\varpi_d}$ for Grassmannians $G/P_d$ when $\Phi=\Phi(G,T)$ is any of the nine irreducible root systems. For convenience, we list below the descriptions of $\Phi^+$, $\Delta$, and the fundamental weights for each of these systems, following \cite{MR1153249}. 

\begin{remark}
Each of the root systems below is viewed inside of a Euclidean vector space $E$ with an inner product, while the root systems $\Phi$ we consider in this paper are associated to a semisimple group $G$. While these two concepts are not the same as sets, they are the same up to isomorphism, so we identify $\Phi$ with the root systems below.
\end{remark}

\subsection*{Type $\mathbf{A}_{n-1}$} The positive roots are $\Phi^+=\{e_i-e_j \mid 1\le i<j \le n\}$, the simple roots are $\Delta=\{e_i-e_{i+1}\mid 1\le i\le n-1\}$, and the fundamental weights are $\varpi_i=e_1+\cdots+e_i$, for $1\le i\le n-1$. 

\subsection*{Type $\mathbf{B}_n$} The positive roots are $\Phi^+=\{e_i,\ e_i\pm e_j \mid 1\le i<j \le n\}$, the simple roots are $\Delta=\{e_i-e_{i+1},\ e_n\mid 1\le i\le n-1\}$, the fundamental weights are
\[\begin{cases}
   &\varpi_n = \frac{1}{2}(e_1+\cdots+e_n)\\
   &\varpi_i = e_1+e_2+\cdots+e_i, \enspace\text{for $1\le i\le n-1$}.
\end{cases}\]

\subsection*{Type $\mathbf{C}_n$} The positive roots are $\Phi^+=\{2e_i,\ e_i\pm e_j \mid 1\le i<j \le n\}$, the simple roots are $\Delta=\{e_i-e_{i+1},\ 2e_n\mid 1\le i\le n-1\}$, the fundamental weights are $\varpi_i=e_1+e_2+\cdots+e_i$, for $1\le i\le n$.

\subsection*{Type $\mathbf{D}_n$} The positive roots are $\Phi^+=\{e_i\pm e_j \mid 1\le i<j \le n\}$, the simple roots are $\Delta=\{e_i-e_{i+1},\ e_{n-1}+e_n\mid 1\le i\le n-1\}$, the fundamental weights are
\[\begin{cases}
    &\varpi_n = \frac{1}{2}(e_1+\cdots+e_n)\\
    &\varpi_{n-1} = \frac{1}{2}(e_1+\cdots+e_{n-1}-e_n) \\
    &\varpi_i = e_1+e_2+\cdots+e_i, \enspace\text{for $1\le i\le n-2$}.
\end{cases}\]
The minuscule weights are $\varpi_n,\varpi_{n-1},$ and $\varpi_1$.

\subsection*{Type $\mathbf{G}_2$} There are 6 positive roots
\[\Phi^+ = \left\{e_1,\ \sqrt{3}e_2,\ \pm e_1+\frac{\sqrt{3}}{2}e_2,\ \pm\frac{3}{2}e_1+\frac{\sqrt{3}}{2}e_2\right\}.\]
The simple roots and their corresponding fundamental weights are
\[\begin{aligned}[c]
\alpha_1&=e_1 \\
\alpha_2&=-\frac{3}{2}e_1+\frac{\sqrt{3}}{2}e_2
\end{aligned}
\quad\quad\quad
\begin{aligned}[c]
\varpi_1 &= \frac{1}{2}(e_1+\sqrt{3}e_2) \\
\varpi_2 &= \sqrt{3}e_2
\end{aligned}
\]    

\subsection*{Type $\mathbf{F}_4$} There are 24 positive roots 
\[\Phi^+ = \{e_i,\ e_i\pm e_j\mid 1\le i<j\le 4\}\cup\left\{\frac{1}{2}(e_1\pm e_2\pm e_3\pm e_4)\right\}.\]
The simple roots and their corresponding fundamental weights are
\[\begin{aligned}[c]
 &\alpha_1 = e_2-e_3 \\
    &\alpha_2 = e_3-e_4 \\
    &\alpha_3 = e_4 \\
    &\alpha_4 = \frac{1}{2}(e_1-e_2-e_3-e_4)
\end{aligned}
\quad\quad\quad
\begin{aligned}[c]
 &\varpi_1 = e_1+e_2 \\
    &\varpi_2 = 2e_1+e_2+e_3 \\
    &\varpi_3 = \frac{1}{2}(3e_1+e_2+e_3+e_4) \\
    &\varpi_4 = e_1
\end{aligned}
\]

\subsection*{Type $\mathbf{E}_6$} There are 36 positive roots 
\begin{align*}
    \Phi^+ = &\{e_i+e_j\mid1\le i<j\le 5\}\cup\{e_i-e_j\mid1\le j<i\le 5\} \\
    &\cup\left\{\frac{\pm e_1\pm e_2\pm e_3\pm e_4\pm e_5+\sqrt{3}e_6}{2} \ \Big| \ \text{even number of minus signs}\right\}
\end{align*}
The simple roots and corresponding fundamental weights are
\[\begin{aligned}[c]
    &\alpha_1 = \frac{e_1-e_2-e_3-e_4-e_5+\sqrt{3}e_6}{2} \\
    &\alpha_2 = e_1+e_2 \\
    &\alpha_3 = e_2-e_1 \\
    &\alpha_4 = e_3-e_2 \\
    &\alpha_5 = e_4-e_3 \\
    &\alpha_6 = e_5-e_4 
\end{aligned}
\quad\quad
\begin{aligned}[c]
    &\varpi_1 = \frac{2\sqrt{3}}{3}e_6 \\
    &\varpi_2 = \frac{e_1+e_2+e_3+e_4+e_5}{2}+\frac{\sqrt{3}}{2}e_6 \\
    &\varpi_3 = \frac{-e_1+e_2+e_3+e_4+e_5}{2} + \frac{5\sqrt{3}}{6}e_6 \\
    &\varpi_4 = e_3+e_4+e_5+\sqrt{3}e_6 \\
    &\varpi_5 = e_4+e_5+\frac{2\sqrt{3}}{3}e_6 \\
    &\varpi_6 = e_5+\frac{\sqrt{3}}{3}e_6
\end{aligned}
\]

\subsection*{Type $\mathbf{E}_7$} There are 63 positive roots 
\begin{align*}
    \Phi^+ = &\{e_i+e_j\mid1\le i<j\le 6\}\cup\{e_i-e_j\mid1\le j<i\le 6\}\cup\{\sqrt{2}e_7\} \\
    &\cup\left\{\frac{\pm e_1\pm e_2\pm e_3\pm e_4\pm e_5\pm e_6+\sqrt{2}e_7}{2} \ \Big| \ \text{odd number of minus signs}\right\}
\end{align*}
The simple roots are
\begin{align*}
    &\alpha_1 = \frac{e_1-e_2-e_3-e_4-e_5-e_6+\sqrt{2}e_7}{2} \\
    &\alpha_2 = e_1+e_2 \\
    &\alpha_3 = e_2-e_1 \\
    &\alpha_4 = e_3-e_2 \\
    &\alpha_5 = e_4-e_3 \\
    &\alpha_6 = e_5-e_4 \\
    &\alpha_7 = e_6-e_5
\end{align*}
and the corresponding fundamental weights are
\begin{align*}
    &\varpi_1 = \sqrt{2}e_7 \\
    &\varpi_2 = \frac{1}{2}(e_1+e_2+e_3+e_4+e_5+e_6)+\sqrt{2}e_7 \\
    &\varpi_3 = \frac{1}{2}(-e_1+e_2+e_3+e_4+e_5+e_6)+3\frac{\sqrt{2}}{2}e_7 \\
    &\varpi_4 = e_3+e_4+e_5+e_6+2\sqrt{2}e_7 \\
    &\varpi_5 = e_4+e_5+e_6+3\frac{\sqrt{2}}{2}e_7 \\
    &\varpi_6 = e_5+e_6+\sqrt{2}e_7 \\
    &\varpi_7 = e_6+\frac{\sqrt{2}}{2}e_7
\end{align*}

\subsection*{Type $\mathbf{E}_8$} There are 120 positive roots 
\begin{align*}
    \Phi^+ = &\{e_i+e_j\mid1\le i<j\le 8\}\cup\{e_i-e_j\mid1\le j<i\le 8\} \\
    &\cup\left\{\frac{\pm e_1\pm e_2\pm e_3\pm e_4\pm e_5\pm e_6\pm e_7+e_8}{2} \ \Big| \ \text{even number of minus signs}\right\}
\end{align*}
The simple roots are
\begin{align*}
    &\alpha_1 = \frac{e_1-e_2-e_3-e_4-e_5-e_6-e_7+e_8}{2} \\
    &\alpha_2 = e_1+e_2 \\
    &\alpha_3 = e_2-e_1 \\
    &\alpha_4 = e_3-e_2 \\
    &\alpha_5 = e_4-e_3 \\
    &\alpha_6 = e_5-e_4 \\
    &\alpha_7 = e_6-e_5 \\
    &\alpha_8 = e_7-e_6
\end{align*}
and the corresponding fundamental weights are
\begin{align*}
    &\varpi_1 = 2e_8 \\
    &\varpi_2 = \frac{1}{2}(e_1+e_2+e_3+e_4+e_5+e_6+e_7+5e_8) \\
    &\varpi_3 = \frac{1}{2}(-e_1+e_2+e_3+e_4+e_5+e_6+e_7+7e_8) \\
    &\varpi_4 = e_3+e_4+e_5+e_6+e_7+5e_8 \\
    &\varpi_5 = e_4+e_5+e_6+e_7+4e_8 \\
    &\varpi_6 = e_5+e_6+e_7+3e_8 \\
    &\varpi_7 = e_6+e_7+2e_8 \\
    &\varpi_8 = e_7+e_8
\end{align*}


\begin{example}\label{ex: type A}
In type $\mathbf{A}_{n-1}$, for each $1\le d\le n-1$, the maximal parabolic subgroup $P_d$ of $\SL_n$ is determined by $I=\Delta\setminus\{e_d-e_{d+1}\}$, and is associated to the fundamental weight $\varpi_d=e_1+\cdots+e_d$. One deduces 
\begin{align*}
    \Phi_I &= \Z\{e_i-e_{i+1} \mid 1\le i\le n-1,\ \forall i\ne d\} \\
     &= \{e_i-e_j \mid 1\le i<j\le d\}\cup\{e_i-e_j \mid d+1\le i<j\le n\}, \\
    \Phi^+\setminus\Phi_I &= \{e_i-e_j \mid 1\le i\le d, \ d+1\le j\le n\}.
\end{align*}
Equation (\ref{equ: a-invariant root system}) of Corollary \ref{cor: a-invariant via root systems} gives
\[-a(R_{\varpi_d})\varpi_d = \sum_{\alpha\in\Phi^+\setminus\Phi_I}\alpha 
    = \sum_{i=1}^d\sum_{j=d+1}^n(e_i-e_j)= (n-d)\sum_{j=1}^de_j + d\sum_{j=1}^de_j 
    = n\varpi_d,\]
where $-\sum_{j=d+1}^ne_j=\sum_{j=1}^de_j$ follows from the relation $\sum_{i=1}^ne_i=0$ in 
\[X(T)=X(T')\big/\Z(e_1+\cdots+e_n),\]
where $T'$ is a maximal torus in $\GL_n$, so $T=T'\cap\SL_n$ \cite[see p.173]{MR2015057}. It follows that $\fpt(R_{\varpi_d})=-a(R_{\varpi_d})=n$ (notice that this is Proposition \ref{prop: fpt Grassmannian}).
\end{example}

\begin{example}\label{ex: type B}
In type $\mathbf{B}_{n}$, $\varpi_n$ is the only minuscule weight. The corresponding maximal parabolic subgroup $P_n$ of $\SO_{2n+1}$ is determined by $I=\Delta\setminus\{e_n\}$, so 
    \begin{align*}
        &\Phi_I = \Z\{e_i-e_{i+1} \mid 1\le i\le n-1 \} = \{e_i-e_j \mid 1\le i<j\le n\}, \\
        &\Phi^+\setminus\Phi_I = \{e_i,\ e_i+e_j \mid 1\le i<j \le n\}, \\
        &-a(R_{\varpi_n})\varpi_n = \sum_{i=1}^ne_i + \sum_{1\le i<j\le n}(e_i+e_j) = \sum_{i=1}^ne_i + (n-1)\cdot\sum_{i=1}^ne_i = 2n\varpi_n.
    \end{align*}
    Then $a(R_{\varpi_n})=-2n$, and hence $\fpt(R_{\varpi_n})=2n$.
\end{example}

\begin{example}
In type $\mathbf{C}_{n}$, the weight is $\varpi_n$ is not minuscule, so the coordinate ring $R_{\varpi_n}$ of the corresponding homogeneous space $\Sp_{2n}/P_n$ (the Lagrangian Grassmannian) under the Pl\"{u}cker embedding is not an ASL. The maximal parabolic subgroup $P_n$ is determined by $I=\Delta\setminus\{2e_n\}$, so one has
    \begin{align*}
    & \Phi_I = \Z\{e_i-e_{i+1} \mid 1\le i\le n-1\} = \{e_i-e_j \mid 1\le i<j\le n\}, \\
    & \Phi^+\setminus\Phi_I = \{2e_i,\ e_i+e_j \mid 1\le i<j\le n\}, \\
    & -a(R_{\varpi_n})\varpi_n = \sum_{i=1}^n2e_i + \sum_{1\le i<j\le n}(e_i+e_j) = 2\cdot \sum_{i=1}^ne_i + (n-1)\cdot \sum_{i=1}^ne_i=(n+1)\varpi_n.
    \end{align*}
    Then, $\fpt(R_{\varpi_n})=-a(R_{\varpi_n})=n+1$.
\end{example}

\begin{example}
In type $\mathbf{F}_4$, for the simple roots $\alpha_2 = e_3-e_4$ and $\alpha_3 = e_4$ we have:
$\,$
\begin{itemize}
    \item For $I=\Delta\setminus\{\alpha_2\}=\{e_2-e_3, \ e_4, \ \frac{1}{2}(e_1-e_2-e_3-e_4)\}$, we have 
    \[\Phi_I=\left\{e_2-e_3, \ e_4, \ \frac{1}{2}(e_1-e_2-e_3-e_4),\ \frac{1}{2}(e_1-e_2-e_3+e_4)\right\};\]
    \[-a(R_{\varpi_2})\varpi_2 = 7e_1+4e_2+4e_3 + \frac{1}{2}(6e_1+2e_2+2e_3) = \frac{1}{2}(20e_1+10e_2+10e_3) = 5\varpi_2.\]
    Therefore, $\fpt(R_{\varpi_2})=-a(R_{\varpi_2})=5$.
    
    $\,$
    
    \item For $I=\Delta\setminus\{\alpha_3\}=\{e_2-e_3,\ e_3-e_4,\ \frac{1}{2}(e_1-e_2-e_3-e_4)\}$, we have
    \[\Phi_I=\left\{e_2-e_3,\ e_2-e_4,\ e_3-e_4,\ \frac{1}{2}(e_1-e_2-e_3-e_4)\right\};\]
    \[-a(R_{\varpi_3})\varpi_3 = 7e_1+3e_2+3e_3+3e_4 + \frac{1}{2}(7e_1+e_2+e_3+e_4) = 7\varpi_3.\]
    Therefore, $\fpt(R_{\varpi_3})=-a(R_{\varpi_3})=7$. 
\end{itemize}
\end{example}


\begin{remark}
With this approach, one can redo the computations of $\fpt(R_{\varpi_d})$ for the corresponding minuscule weights (as in Examples \ref{ex: type A}, \ref{ex: type B}) and obtain the values in Table \ref{table: minuscule} without the use of principal chains. Using the descriptions of the exceptional type root systems provided above, the same method allows one to compute $\fpt(R_{\varpi_d})$ where $R_{\varpi_d}$ is the coordinate ring of an exceptional type Grassmannian under the Pl\"{u}cker embedding, over an $F$-finite field of characteristic $p$ (note that $R_{\varpi_d}$ is not necessarily an ASL). Below is a complete list of their values:

\begin{table}[H]
\centering 
\caption{The $F$-pure threshold of exceptional type Grassmannians}
\begin{tabular}{ |p{1cm}|p{3cm}|p{3cm}|p{5cm}|  }
\hline
Type & Weight index $d$ & Grassmannian & $\fpt(R_{\varpi_d})$ \\
\hline
$\mathbf{G}_2$ & $1,2$ & $G_2/P_d$ & $5,3$ \\
$\mathbf{F}_4$ & $1,2,3,4$ & $F_4/P_d$  & $6,5,7,8$ \\
$\mathbf{E}_6$ & $1,2,3,4,5,6$ & $E_6/P_d$ & $12,11,9,7,9,12$ \\
$\mathbf{E}_7$ & $1,2,3,4,5,6,7$ & $E_7/P_d$ & $17,14,11,8,10,13,18$ \\
$\mathbf{E}_8$ & $1,2,3,4,5,6,7,8$ & $E_8/P_d$ & 
$23,17,13,9,11,14,19,29$
\\
\hline
\end{tabular}
\label{table: exceptional}
\end{table} 
\end{remark}

As a final note, we have seen that all the formulas for $\fpt(R)$ computed in this paper is independent of the characteristic of the filed. We immediately get the following using Theorem \ref{thm: lct limit of fpt}.

\begin{theorem}
Let $R$ be any one of the coordinate rings of the flag varieties whose $F$-pure threshold we have computed in this section, with underlying field $\C$ the complex numbers. Then $\lct(R)=\fpt(R)$.
\end{theorem}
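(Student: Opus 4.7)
The plan is to observe that every $\fpt$ value produced in this paper is manifestly independent of the prime characteristic $p$, after which Theorem \ref{thm: lct limit of fpt} yields the conclusion: the limit of a constant sequence equals its common value, giving $\lct(R_{\mathbb{C}}) = \fpt(R)$.

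First I would go through the computations case by case and make the characteristic-independence explicit. In the ASL setting, Theorem \ref{thm: fpt of flag} and Table \ref{table: minuscule} express $\fpt$ as the cardinality of the principal chain of the underlying distributive lattice, which is an entirely combinatorial invariant making no reference to $p$. For the general Grassmannian case treated in Section \ref{sec: Flag varieties under other embeddings}, Corollary \ref{cor: a-invariant via root systems} identifies $-a(R_d)$ with the unique integer $m$ determined by $\sum_{\alpha \in \Phi^+ \setminus \Phi_I}\alpha = m\varpi_d$, a root-theoretic invariant that sees no trace of $p$; the values in Table \ref{table: exceptional} are obtained by this recipe. The non-Gorenstein computations in Propositions \ref{prop: non-gorenstein flag} and \ref{prop: non-gorenstein grassmannians} produce rational multiples of the same $p$-free invariants.

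Next, for each coordinate ring $R$ I would realize $R = S/\mathfrak{a}$ with $S = \mathbb{Z}[x_1,\ldots,x_N]$ and $\mathfrak{a}$ the corresponding homogeneous defining ideal, so that the reductions $R_p = S_p/\mathfrak{a}_p$ are defined for all $p \gg 0$ and have $p$-independent Hilbert series. Combined with the previous step, $\fpt(\mathfrak{a}_p)$ is then constant in $p$, so the limit in Theorem \ref{thm: lct limit of fpt} is trivially that constant and we obtain $\lct(\mathfrak{a}_{\mathbb{C}}) = \fpt(R)$.

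The only real obstacle I anticipate is a notational one: Theorem \ref{thm: lct limit of fpt} is phrased in terms of the $F$-pure threshold of the ideal $\mathfrak{a} \subseteq S$, whereas the paper's convention sets $\fpt(R) := \fpt(\mathfrak{m}_R)$. For the Gorenstein cases this is bridged by Theorem \ref{thm: fpt graded rings} together with the parallel characterization of $\lct(R_{\mathbb{C}})$ via the $a$-invariant in characteristic zero, so both sides equal $-a(R)$ and match. For the Veronese constructions producing the non-Gorenstein rings of Propositions \ref{prop: non-gorenstein flag}.(3) and \ref{prop: non-gorenstein grassmannians}, the identity $\fpt(R^{(n)}) = \fpt(R)/n$ and its log canonical analogue transport the identification through the Veronese subring, so no new machinery is required.
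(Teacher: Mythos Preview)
Your approach is essentially the same as the paper's: observe that every computed value of $\fpt$ is visibly independent of the prime $p$, then invoke Theorem~\ref{thm: lct limit of fpt} so that the limit collapses to the common value. The paper's own proof is a two-line version of exactly this argument.

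Where you go beyond the paper is in flagging, and then working around, the mismatch between the statement of Theorem~\ref{thm: lct limit of fpt} (which concerns $\fpt(\mathfrak{a}_p)$ for an ideal $\mathfrak{a}$ in a polynomial ring) and the quantity actually computed in the paper (namely $\fpt(R)=\fpt(\mathfrak{m}_R)$ for the pair $(\spec R,\mathfrak{m}_R)$). The paper's proof applies Theorem~\ref{thm: lct limit of fpt} without comment, implicitly relying on the more general principle $\lct=\lim_{p\to\infty}\fpt$ for pairs. Your proposed bridge via the $a$-invariant---using Theorem~\ref{thm: fpt graded rings} in positive characteristic and its characteristic-zero analogue for Gorenstein rings, then transporting through Veronese subrings for the non-Gorenstein cases---is a legitimate and arguably more self-contained route given only what is stated in the paper. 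Either way the substance is the same; your version simply makes explicit a step the paper leaves tacit.
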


\begin{proof}
Let $p$ be a prime and $R_p$ be the modulo $p$ reduction of $R$. We have $\fpt(R)=\fpt(R_p)$, so by Theorem \ref{thm: lct limit of fpt}, $\lct(R)=\lim_{p\to\infty}\fpt(R_p)=\fpt(R)$.
\end{proof}

$\,$

$\,$

\bibliographystyle{alpha} 
\bibliography{bibliography.bib}

\end{document}